\documentclass[12pt,a4paper]{amsart}

\usepackage[x11names]{xcolor}

\usepackage{amstext, amsmath, amssymb, amsfonts, amsthm, mathtools, latexsym}
\usepackage{graphicx,tikz}

\usepackage{enumitem}
\usepackage[left=2cm, right=2cm, bottom=3cm]{geometry} 
\usepackage[skip=0.4cm,indent=0.5cm]{parskip}
\usepackage{indentfirst}
\usepackage{multicol}
\usepackage{hyperref}

\vfuzz2.5pt 
\hfuzz2.5pt 

\newcommand{\R}{\ensuremath{\mathbb{R}}}
\newcommand{\N}{\ensuremath{\mathbb{N}}}

\newcommand{\Pol}{\ensuremath{\mathbb{P}}}
\newcommand{\Exp}{\ensuremath{\mathbb{E}}}

\newcommand{\OR}{\ensuremath{\mathcal{O}}}
\newcommand{\OG}{\ensuremath{\mathbb{O}}}
\newcommand{\COG}{\ensuremath{\overline{\mathbb{O}}}}
\renewcommand{\L}{\ensuremath{\mathbb{L}}}
\newcommand{\CL}{\ensuremath{\overline{\mathbb{L}}}}
\newcommand{\B}{\ensuremath{\mathbb{B}}}
\newcommand{\CB}{\ensuremath{\overline{\mathbb{B}}}}

\renewcommand{\H}{\ensuremath{\mathcal{H}}}

\newcommand{\U}{\ensuremath{\mathcal{U}}}
\newcommand{\V}{\ensuremath{\mathcal{V}}}
\newcommand{\F}{\ensuremath{\mathcal{F}}}
\newcommand{\A}{\ensuremath{\mathcal{A}}}

\renewcommand{\S}{\ensuremath{\mathcal{S}}}
\newcommand{\e}{\ensuremath{\varepsilon}}
\newcommand{\f}{\ensuremath{\varphi}}

\numberwithin{figure}{section}

\newtheorem{theorem}{Theorem}[section]

\newtheorem{lemma}[theorem]{Lemma}
\newtheorem{corollary}{Corollary}

\newtheorem{proposition}[theorem]{Proposition}

 \pagenumbering{arabic}

\title{Flexibility of generalized entropy for wandering dynamics}

\author{Javier Correa}
\address{Universidade Federal de Minas Gerais}
\email {jcorrea@mat.ufmg.br}

\author{Hellen de Paula} 
\address{Universidade Federal de Minas Gerais} 
\email {hellenlimadepaula@ufmg.br}

\begin{document}

\begin{abstract}
We show a flexibility result in the context of generalized entropy. The space of dynamical systems we work with are homeomorphisms on the sphere whose non-wandering set consists of only one fixed point.

\end{abstract}

 \maketitle

\vspace{2cm}
\noindent
Keywords: Generalized entropy, Polynomial entropy, Flexibility.
\\

\noindent
2020 Mathematics subject classification: 37A35, 37B40 and 37E30.

\section{Introduction}

An important problem in dynamical systems is the measuring of the complexity of a map in terms of its orbits. The topological entropy of a system studies the exponential growth rate at which orbits are separated, and it has become a crucial tool for classifying highly chaotic dynamical systems. However, there are many interesting families of dynamical systems, where every system has a vanishing entropy, and therefore, another object is needed. The object that studies the polynomial growth rate is called polynomial entropy, and this was introduced by J. P. Marco in  \cite{Ma13}. Interestingly enough, ten years before, S. Galatolo introduced in \cite{Ga03} a generalization of topological entropy that extends classical and polynomial entropy for one-parameter families of orders of growth. This concept is a translation of A. Katok and J. P. Thouvenot's notion of slow entropy defined in \cite{KaTh97}.   For a more detailed exposition on the works related to the previous quantities, check  \cite{KaKaWe20}, \cite{CoPa23}  and \cite{CoPu21}. In this article, we are going to work with the notion of generalized entropy introduced by the first author and E. Pujals in \cite{CoPu21}. This object, instead of quantifying the complexity of a system with a single number, works directly in the space of the orders of growth $\mathbb{O}$. In subsection \ref{subOGGE}, we show how $o(f)$, the generalized entropy of a map $f$, is constructed.

In this text, we are interested in the families of dynamical systems on the sphere whose non-wandering set consists of only one fixed point. We shall use $S^2$ to denote the sphere and $\Omega(f)$ to denote the non-wandering set of a map $f:S^2\to S^2$. We define $\H$ as the family of homeomorphisms $f:S^2\to S^2$, such that $\#\Omega(f)=1$.  A simple consequence of the variational principle is that any map in $\H$ has vanishing classical entropy. The family  $\H$ is naturally interesting because the orientation preserving ones are in a direct bijection with Brouwer homeomorphisms of the plane (by compactifying the plane with one point).

For the family $\H$, in the context of polynomial entropy, we have the result of  L. Hauseux and F. Le Roux in \cite{HaRo19}. We shall call the polynomial entropy of a map $h_{pol}(f)$. 

\begin{theorem}[L. Haseux and F. Le Roux]\label{theoHLR}
The following happens:
\begin{enumerate}
\item For any $f\in \H$, $h_{pol}(f)\in \{1\}\cup [2,\infty]$. 
\item For any $t\in \{1\}\cup [2,\infty]$, there exists $f\in \H$ such that $h_{pol}(f)=t$. Moreover, $h_{pol}(f)=1$ if and only if $f$ is the compactification of a map conjugate to a translation in $\R^2$. 
\end{enumerate}
\end{theorem}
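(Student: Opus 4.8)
The plan is to reduce to the orientation-preserving case --- if $f$ reverses orientation, pass to $f^{2}\in\H$, using that $h_{pol}(f)=h_{pol}(f^{2})$ because a polynomial growth rate is unchanged by rescaling time --- and then to identify $f$ with a Brouwer homeomorphism of the plane $\R^{2}=S^{2}\setminus\{p\}$, where $p$ is the unique non-wandering point (it is fixed, and $f$ has no other periodic, hence no other fixed, point). The lower bound $h_{pol}(f)\ge 1$ in part (1) needs no structure at all: fix $Q\neq p$; since $\Omega(f)=\{p\}$ the point $Q$ is not periodic and $f^{-m}(Q)\to p$, so $\e_{0}:=\inf_{m\ge 1}d(Q,f^{-m}(Q))$ is a positive real (a sequence of positive numbers converging to $d(Q,p)>0$). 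Writing $d_{n}(x,y)=\max_{0\le k<n}d(f^{k}x,f^{k}y)$ and $G(n,\e)$ for the maximal cardinality of an $(n,\e)$-separated set, the set $S_{n}=\{Q,f^{-1}(Q),\dots,f^{-(n-1)}(Q)\}$ has $n$ elements and satisfies $d_{n}(f^{-i}Q,f^{-j}Q)\ge d(f^{i}f^{-i}Q,f^{i}f^{-j}Q)=d(Q,f^{-(j-i)}Q)\ge\e_{0}$ for $0\le i<j\le n-1$; hence $G(n,\e_{0}/2)\ge n$ for all $n$, and $h_{pol}(f)\ge 1$.

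For everything else I would use two inputs: Brouwer's plane translation theorem --- through any point one obtains a proper Brouwer line $L$ whose iterates $L_{n}=f^{n}(L)$ are pairwise disjoint and coherently ordered and bound fundamental domains $D_{n}$ with $f(D_{n})=D_{n+1}$, organized into maximal translation domains on which $f$ is conjugate to a translation --- and the structure theory of Brouwer homeomorphisms, which provides the dichotomy: either $f$ is conjugate to a translation of $\R^{2}$, or $f$ carries a Reeb component, i.e.\ an invariant strip bounded by two invariant lines on whose interior orbits ``turn around''. If $f$ is conjugate to a translation then, since in both systems every orbit is proper with both ends at $p$, the conjugacy can be taken to extend over $p$, so on $S^{2}$ the map is conjugate to the one-point compactification of $(x,y)\mapsto(x+1,y)$; a covering estimate in the spherical metric --- an orbit segment of length $n$ is essentially a horizontal segment, and $\e$-shadowing all of them costs one center per $\e$-cell across a window of $\asymp n$ cells --- gives $G(n,\e)\asymp n$, so $h_{pol}(f)=1$. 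This realizes $t=1$ and, together with the dichotomy, shows it is realized \emph{only} by one-point compactifications of maps conjugate to a translation.

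If instead $f$ has a Reeb component, I would extract from it, for each $n$, a genuinely two-parameter family of $(n,\e)$-separated orbit segments: one parameter is the ``phase'' along the component, which already contributes $\asymp n$ distinguishable values exactly as in the translation case, and an independent one is the ``depth'', i.e.\ which interior leaf one turns on, whose traversal time tends to infinity as the leaf approaches a boundary line and hence contributes a further $\asymp n$ distinguishable values over time $n$; this gives $G(n,\e)\gtrsim n^{2}$ and $h_{pol}(f)\ge 2$, completing part (1). For the realization of an arbitrary $t\in[2,\infty]$ in part (2), I would build $f_{t}\in\H$ by gluing Reeb-type components into a translation flow with prescribed combinatorics: for finite $t\ge 2$ one tunes the rate at which interior leaves accumulate on the boundary lines --- and spreads the feature over a sequence of scales $n_{k}\to\infty$ --- so that the traversal time of the leaf at depth $D$ grows like $D^{t-1}$, forcing the number of $(n,\e)$-separated orbit segments to be of order $n^{t}$ along $n=n_{k}$ and $O(n^{t})$ for every $n$; for $t=\infty$ one stacks components of unbounded complexity. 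Throughout, one keeps all inserted fundamental domains strictly ordered and escaping to $p$, which is exactly what guarantees $f_{t}\in\H$.

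The main obstacle is the combination of the sharpness of the dichotomy with the upper bounds. Showing that a Reeb component forces $h_{pol}\ge 2$ rather than merely $h_{pol}>1$ already requires a careful independence argument for the two parameters; but the genuinely delicate points are the matching upper bounds $h_{pol}(f)\le 1$ in the translation case and $h_{pol}(f_{t})\le t$ in the constructions, since these demand control of \emph{all} $(n,\e)$-separated sets --- not only the visible ``turning'' families --- while simultaneously keeping the constructed maps inside $\H$.
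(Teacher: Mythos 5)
This theorem is not proved in the paper at all: it is quoted from Hauseux--Le Roux \cite{HaRo19}, and the method behind it (the one the present paper extends, cf.\ theorem \ref{TeoCod} and section \ref{secTeoFlex}) is the coding of orbits by finite families of wandering, \emph{mutually singular} compact neighborhoods, together with an explicit surface built by gluing half-planes. Your preliminary steps are fine (the $n$-point $(n,\e_0)$-separated set along a backward orbit giving $h_{pol}\ge 1$; $h_{pol}(f)=h_{pol}(f^2)$; extending a plane conjugacy over the point at infinity). The core of your part (1), however, rests on a structure statement that is stronger than what is true: a Brouwer homeomorphism that is not conjugate to a translation need not carry an invariant strip bounded by two invariant lines (it need not have any invariant line at all). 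The correct dichotomy, and the one actually used in \cite{HaRo19}, is asymptotic: either $f$ is conjugate to a translation, or there exist two mutually singular wandering compact neighborhoods (a Reeb \emph{configuration}, i.e.\ oscillation between translation domains, not an invariant Reeb component), and it is this singular pair that forces on the order of $n^2$ codings of length $n$ and hence $h_{pol}\ge 2$. As written, your step ``not conjugate to a translation $\Rightarrow$ invariant Reeb component'' is unsupported, and with it the exclusion of the interval $(1,2)$. Note also that the reduction to $f^2$ proves the membership statement for orientation-reversing $f$ but not the ``moreover'' part: the compactification of $(x,y)\mapsto(x+1,-y)$ has $h_{pol}=1$ and is not conjugate to a translation, so that characterization is genuinely a statement about the orientation-preserving (Brouwer) case.

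The realization mechanism you propose in part (2) is also flawed for finite $t>2$. A single Reeb-type component is coded by essentially two wandering boxes; a length-$n$ word is then determined by the positions (and bounded lengths) of the two symbol blocks, so the number of codings is of bounded polynomial degree --- essentially $n^2$ --- no matter how you tune the traversal time: making the leaf at depth $D$ take time $D^{t-1}$ only changes \emph{which} depth realizes a given transit time $\le n$, not \emph{how many} transit times $\le n$ are realizable. The actual construction (recalled in section \ref{secTeoFlex}, following \cite{HaRo19}) chains together $L+2$ half-planes, so that each coded orbit carries $L+1$ successive transit times $k_1,\dots,k_{L+1}$: the integer part of the exponent comes from the number of glued components (roughly $\lceil t\rceil$ of them), and the finer behavior comes from tuning how many values of the later transit times are realizable as a function of $k_1$, which produces the count $n\sum_{k_1\le n} a_2(k_1)\cdots a_{L+1}(k_1)$ of subsection \ref{secPartCase}. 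Your remark that for $t=\infty$ one should ``stack components'' points in the right direction, but the finite-$t$ recipe as stated would yield $h_{pol}=2$; and the matching upper bounds you correctly flag as the delicate point are precisely what the mutually-singular coding theorem (theorem \ref{TeoCod} here, its precursor in \cite{HaRo19}) is designed to provide, since it reduces the count of all $(n,\e)$-separated sets to the count of codings.
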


The first part of the theorem gives natural boundaries and the second part is a flexibility result in the sense introduced by J. Bochi, A. Katok and F. Rodriguez-Hertz in \cite{BoKaRH19}.  The article \cite{HaRo19} has two parts. First, a way to code the orbits of a dynamical system with  $\#\Omega (f)=1$, which allows the computation of $h_{pol}(f)$ and second, the construction of maps in the point $(2)$ of theorem \ref{theoHLR}. We have extended the first part in \cite{CoPa23} to generalized entropy and also to maps with a finite non-wandering set. With this tool, we computed the generalized entropy of Morse-Smale diffeomorphisms on the surfaces and some maps in the boundary of chaos. Our main result in this article is an extension of the second part of the work by L. Hauseux and F. Le Roux.

In order to enunciate our theorem, we need to fix some terminology associated to $\OG$ the space of orders of growth. To us, an order of growth is the class of non-decreasing sequences in $(0,\infty)$ which have the same asymptotic speed  at $\infty$. We represent these classes by $[a(n)]$ for the general case and the formula between brackets if it is defined by one, for instance $[n^2]$. The family of polynomial orders of growth is $\Pol=\{[n^t]:0<t<\infty\}$. We say that an order of growth $[a(n)]$ verifies the linearly invariant property (LIP) if for some $m\geq 2$, $[a(mn)]=[a(n)]$. The set of orders of growth that verifies LIP is represented by $\L$ (observe that $\Pol \subset \L$).  The abstract completion of $\OG$ by its partial order is the complete set of orders of growth $\COG$ and the generalized entropy of a map $o(f)$ is defined in $\COG$. General elements of $\COG$ are represented by $o$. We define $\CL$ as the countable completion of $\L$ in $\COG$ (Check subsections \ref{subOGGE}, \ref{subBJP} and \ref{subLIP} for more details).

 By our results in \cite{CoPa23}, it is immediate that for any $f\in \H$, either $o(f)=[n]$ or $[n^2]\leq o(f) \leq \sup(\Pol)$. The main result in this article is the following. 

\begin{theorem}\label{teoFlex}
For any $o\in \CL$ verifying $o=[n]$ or $[n^2]\leq o \leq \sup(\Pol)$, there exists $f\in \H$ such that $o(f)=o$. 
\end{theorem}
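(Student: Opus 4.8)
The plan is to handle the two cases of the hypothesis separately. For $o=[n]$ we take $f$ to be the one-point compactification of a translation of $\R^2$; using the coding of \cite{CoPa23} one checks that such an $f\in\H$ has $o(f)=[n]$, which also recovers the ``only if'' statement in Theorem~\ref{theoHLR}(2). The substantial case is $[n^2]\le o\le\sup(\Pol)$ with $o\in\CL$, and for it the strategy rests on three ingredients: (i) a family of \emph{building blocks}, each a homeomorphism of a closed disk realizing a prescribed order of growth in $\L$; (ii) a presentation of $o\in\CL$ as a countable combination of members of $\L$ lying in $[n^2,\sup(\Pol)]$, via the description of $\CL$ in Subsection~\ref{subLIP}; and (iii) a gluing procedure assembling countably many building blocks, placed in pairwise disjoint wandering regions accumulating on the unique non-wandering point, into a single $f\in\H$ whose generalized entropy is the prescribed combination.

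For the building blocks I would start from the Hauseux--Le Roux construction \cite{HaRo19} of maps with polynomial entropy $t\ge 2$, which is already of a self-similar ``nested comb'' type: at the $k$-th dyadic scale near the fixed point one inserts a fixed number of parallel strips, an orbit needs time comparable to $2^k$ to traverse scale $k$, and the number of $\e$-distinguishable orbit segments of length $n\approx 2^k$ therefore grows like a geometric product, i.e.\ polynomially. To realize a general $o'=[a'(n)]\in\L$ with $[n^2]\le o'\le\sup(\Pol)$, I would instead insert at scale $k$ a number of strips comparable to $a'(2^k)/a'(2^{k-1})$; the linearly invariant property makes these ratios bounded above and below --- precisely what a planar construction can accommodate --- and the resulting local complexity is comparable to $a'(2^k)$, hence equal to $o'$ as an order of growth. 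This step uses the exact (order-of-growth, not merely exponent) form of the coding from \cite{CoPa23}; the restriction $[n^2]\le o'$ costs nothing, since one nontrivial strip already forces quadratic growth in this two-dimensional picture.

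For the gluing, write $o=\sup_k o_k$ with $o_k\in\L\cap[n^2,\sup(\Pol)]$ increasing --- closedness of $\L$ under finite maxima lets one truncate below by $[n^2]$ without leaving $\L$ or raising the supremum, and the remaining shapes allowed in $\CL$ are treated the same way. Put the block realizing $o_k$ in a wandering disk $W_k$, with the $W_k$ pairwise disjoint, shrinking, and accumulating exactly on the non-wandering point $p$, and design the dynamics so the $W_k$ sit ``in parallel'': every orbit meets at most one $W_k$ and, once it leaves it, tends to $p$ without returning to $\bigcup_k W_k$. If the nesting is chosen so that at spatial scale $\e$ only $W_1,\dots,W_{K(\e)}$ are resolved, with $K(\e)\to\infty$ as $\e\to 0$, then the largest $(n,\e)$-separated set has size comparable to $\sum_{k\le K(\e)}$ (the count for block $k$), hence to $a_{K(\e)}(n)$ by monotonicity; passing to orders of growth and then to the supremum over $\e$ gives $o(f)=\sup_k o_k=o$, and running the same estimate as an upper bound over all $k$ gives the matching inequality $o(f)\le o$.

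The step I expect to be the main obstacle is this last one: controlling the generalized entropy of the assembled map \emph{exactly}. Since generalized entropy is far finer than polynomial entropy, getting the exponents right in each block is not enough --- one needs the coding of \cite{CoPa23} to track the precise order of growth, and one needs the countably many blocks to be genuinely independent scale by scale, so that their contributions add to a supremum rather than multiply into something strictly larger; this is exactly why the ``in parallel, terminal'' arrangement of the $W_k$ and the tuning of $K(\e)$ are indispensable. A secondary point requiring care is verifying that the building block for an arbitrary $o'\in\L$ is a genuine homeomorphism of $S^2$ with a single non-wandering point, and that the infinite concatenation does not create new non-wandering points at $p$.
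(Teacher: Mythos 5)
Your overall architecture coincides with the paper's: the translation compactification for $o=[n]$, one ``building block'' per element of $\L$, a countable assembly of blocks in shrinking regions accumulating on the fixed point, a monotone rearrangement of the countable family (the paper's lemma \ref{lemSupOrd}), a lower bound by restriction and an upper bound by noting that at observation scale $\e$ only finitely many blocks are resolved. The part that is genuinely incomplete is the block construction itself, which is where all the work in the paper lies. Your mechanism --- insert at dyadic scale $k$ roughly $a'(2^k)/a'(2^{k-1})$ strips and let the complexity telescope multiplicatively across scales --- does not deliver the \emph{exact} order of growth. These ratios are bounded (by LIP) but not integers; rounding each to an integer costs a bounded multiplicative error per scale, and over the $\approx\log_2 n$ scales seen by time $n$ these errors compound to a factor polynomial in $n$, which changes the class $[a'(n)]$ (generalized entropy, unlike $h_{pol}$, is destroyed by such factors). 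Moreover you never explain how a \emph{fixed} finite family $\F$ of mutually singular sets --- equivalently a fixed $\e$ in $s_{f,\e}(n)$ --- registers growth comparable to $a'(n)$ for all $n$ instead of saturating once $n$ exceeds the scales that $\F$ resolves; a supremum over $\e$ of saturating counts need not recover $[a'(n)]$. The paper's block is built precisely to avoid both issues: one fixes $L$ with $[a(n)]\le[n^L]$, glues $L+2$ half-planes so that a single fixed family $Y_1,\dots,Y_{L+2}$ is traversed with transit times $(k_1,\dots,k_{L+1})$ dictated by the height, chooses \emph{integer} sequences $a_2,\dots,a_{L+1}$ by a recursive lemma (this is where BJP enters) so that $\bigl[\,n\sum_{k\le n}a_2(k)\cdots a_{L+1}(k)\,\bigr]=[a(n)]$ exactly --- an additive bookkeeping in which rounding does not compound --- and then uses LIP together with the syndetic-set lemma \ref{LemSyn} on $\S=(2L+2)\N$, plus the displacement count of words, to get $[c_{f,\F}(n)]=[a(n)]$ on the nose. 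Your phrase ``the resulting local complexity is comparable to $a'(2^k)$'' is exactly the statement that needs a proof, and your identification of the gluing as the main obstacle misplaces the difficulty.

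A second, smaller but real error: the blocks cannot sit in \emph{wandering} disks $W_k$. If $W_k$ is wandering then $f^n(W_k)\cap W_k=\emptyset$ for all $n\ge1$, so every orbit meets $W_k$ at most once, at a single time, and no dynamics of nontrivial complexity can be housed inside it. The sets that generate complexity (the $Y_i$) are wandering, but the region hosting a block must be invariant: in the paper one takes nested invariant disks $D_k$ with $f(D_k)=D_k$, $\infty\in\partial D_k$ and $\mathrm{diam}(D_k)\to0$, each new block living in $D_{k-1}$ and leaving $D_k$ untouched. With that correction your ``in parallel'' arrangement and your $\e$-by-$\e$ upper bound become exactly the paper's argument ($G_n\cup\{x_k\}$ generates once $\mathrm{diam}(D_k)\le\e/2$), and the supremum computation goes through; without the corrected block construction, however, the theorem is not proved.
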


We observe that the order of growth $[n^2 log(n)]$ belongs to $\CL$ and therefore, the following corollary holds.

\begin{corollary}\label{coroPolEqu}
There exists $f_1,f_2\in \H$ such that $o(f_1)=[n^2]$ and $o(f_2)=[n^2 log(n)]$. In particular, $h_{pol}(f_1)=h_{pol}(f_2)$ yet $o(f_1)\neq o(f_2)$. 
\end{corollary}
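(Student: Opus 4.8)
The plan is to prove Theorem \ref{teoFlex}; Corollary \ref{coroPolEqu} then follows at once, since $[n^2]$ and $[n^2\log(n)]$ both lie in $\L\subset\CL$ and in the admissible range $[n^2]\le o\le\sup(\Pol)$, while $h_{pol}$ records only the polynomial degree, so that the two maps produced by the theorem satisfy $h_{pol}(f_1)=h_{pol}(f_2)=2$ although $[n^2]\neq[n^2\log(n)]$ in $\OG$. For the theorem itself I would split along the two cases of the hypothesis. The case $o=[n]$ is realized by taking $f$ to be the one-point compactification of a translation of $\R^2$: such $f$ belongs to $\H$, and the coding of \cite{CoPa23} gives $o(f)=[n]$ because, for a translation, the number of distinct length-$n$ itineraries with respect to the relevant decomposition grows linearly. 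All the work is in the range $[n^2]\le o\le\sup(\Pol)$.

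For that range I would work, through the bijection recalled in the introduction, with Brouwer homeomorphisms $F$ of $\R^2$, for which $o(f)$ equals the order of growth of the number of distinct length-$n$ codes of $F$-orbits relative to a suitable decomposition (the coding of \cite{CoPa23}, extending that of \cite{HaRo19}). The idea is to start from the horizontal translation and insert along it a sequence of pairwise disjoint compact regions $G_1,G_2,\dots$, each built from maximal translation domains arranged so that an orbit entering $G_j$ must follow one of finitely many admissible routes of controlled lengths before escaping downstream and never returning. Keeping each $G_j$ free of fixed points makes $F$ again a Brouwer homeomorphism, so $\#\Omega(f)=1$ after compactification. When $o=[a(n)]\in\L$ one takes the $G_j$ to be rescaled copies of $G_1$ by the factor $m\ge2$ furnished by $[a(mn)]=[a(n)]$, with the number and lengths of the routes calibrated so that an orbit having crossed $G_1,\dots,G_j$ admits on the order of $a(m^{\,j})$ distinguishable codes of the corresponding length; self-similarity together with LIP then forces the itinerary count at time $n$ to have order of growth $[a(n)]$, and the constraint $[n^2]\le[a(n)]\le\sup(\Pol)$ is exactly what allows the routes to be fitted in while keeping every finite stage sub-exponential, in accordance with the a priori bounds of \cite{CoPa23}. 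For a general $o=\sup_k o_k\in\CL$ in the range one instead lets the complexities of the successive $G_j$ ramp up through the $o_k$, so that crossing $G_1,\dots,G_j$ contributes order of growth $\le o$ for each $j$ while the supremum over $j$ is $o$.

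The step I expect to be the main obstacle is the simultaneous verification, inside this construction, of two things. First, that no new non-wandering behaviour is created: every orbit must leave each region it enters and then escape to infinity without re-entering any $G_j$, so that $F$ stays fixed-point free and within the class to which the coding of \cite{CoPa23} applies, which is what building the $G_j$ from maximal translation domains (leaving no invariant free topological disk) is meant to guarantee. Second, that the itinerary count is pinned from both sides: the lower bound needs the $\sim a(m^{\,j})$ routes through $G_1,\dots,G_j$ to produce genuinely distinct codes in the chosen decomposition, and the upper bound needs the transitions between consecutive regions to introduce no further codes. Once these are established the order of growth of the count can be read off as $o$, giving $o(f)=o$ and completing the proof; the $\CL$ case requires in addition the routine check that the regions $G_j$ accumulate only at the fixed point and that each orbit meets at most finitely many of them, so that $o(f)$ is the supremum of the contributions rather than anything larger.
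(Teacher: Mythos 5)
Your reduction of the corollary itself is exactly the paper's: $[n^2]$ and $[n^2\log(n)]$ belong to $\L\subset\CL$ and satisfy $[n^2]\leq o\leq \sup(\Pol)$, so Theorem \ref{teoFlex} yields $f_1,f_2\in\H$ with the stated generalized entropies, and Theorem \ref{teo122} gives $h_{pol}(f_i)=\pi_{\Pol}(o(f_i))=2$ for both, while $[n^2]\neq[n^2\log(n)]$ in $\OG$. No issue there.

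The gap is in the part you actually undertake, namely the proof of Theorem \ref{teoFlex}, whose decisive steps your sketch defers rather than carries out. First, your architecture (infinitely many self-similar regions $G_j$ inserted along a translation, calibrated so that crossing $G_1,\dots,G_j$ gives about $a(m^j)$ codes) is not accompanied by any mechanism that pins the itinerary count at an \emph{arbitrary} time $n$ from both sides, nor by a verification that the chosen coding sets are wandering, mutually singular compact neighborhoods so that Theorem \ref{TeoCod} applies; you name precisely these points as ``the main obstacle'' and leave them open. The paper resolves them with one concrete gadget: the Hauseux--Le Roux surface glued from $L+2$ half-planes via $\Phi_i(x,y)=(x+\f_i(y),y)$, where the $\f_i$ take prescribed integer values $-k_i$ on nested intervals $I_{k_1,\dots,k_i}$, so that the whole count reduces to choosing sequences $a_i(n)\leq n$ with $\left[n\sum_{k\leq n}a_2(k)\cdots a_{L+1}(k)\right]=[a(n)]$ (possible exactly because $[a(n)]$ has the BJP and $[n^2]<[a(n)]\leq[n^L]$), after which LIP plus the syndetic Lemma \ref{LemSyn} and a displacement argument give $[c_{f,\F}(n)]=[a(n)]$ on $\S=(2L+2)\N$ and hence everywhere. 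Second, for general $o\in\CL$ the upper bound $o(f)\leq\sup(\Gamma)$ for your limit construction is not a ``routine check'': the paper needs to replace $\Gamma$ by an ordered family (Lemma \ref{lemSupOrd}), arrange nested disks $D_k$ with $diam(D_k)\to 0$ invariant under every stage, and run a dynamical-ball argument showing $g_{f,\e}(n)\leq g_{f_{|S_k},\e}(n)+1$ once $diam(D_k)\leq\e/2$; without an analogue of this, nothing prevents the accumulated gadgets from pushing $o(f)$ above $\sup(\Gamma)$. As it stands, the proposal establishes the corollary only conditionally on a theorem whose proof it does not supply.
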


The first part of theorem \ref{theoHLR} implies that polynomial orders of growth is the right scope to analyze the dispersion of orbits for maps in $\H$. However, we argue, in the light corollary \ref{coroPolEqu}, that generalized entropy allows to appreciate on a deeper level how rich the universe of possible complexities actually is.

Let us comment on the hypothesis $\CL$. There is a weaker property, which an order of growth may verify, that we call the bounded jump property (BJP) and was introduced in \cite{CoPa23}. We shall use $\B$ to represent the elements in $\OG$ that verifies BJP and $\CB$ to represent the countable completion of $\B$.  The BJP is in fact necessary: for any $f:M\to M$ with $M$ a compact metric space, $o(f)\in \CB$ (see proposition \ref{propB}). Therefore, the best version of theorem \ref{teoFlex} that may be true is for any $o\in \CB$. The linearly invariant property implies the bounded jump property  ($\L\subset \B$) but the converse is false. For example, any exponential order of growth $[e^{tn}]$ fails to verify LIP. The LIP is in fact associated to sub-exponential orders of growth (see proposition \ref{propLIPProp}), but even in this context, we believe the construction of elements in $[a(n)]\in \B\setminus \L$ is possible such that $ [a(n)]< \sup(\Pol)$. On the other hand, any simple example of $[a(n)]< \sup(\Pol)$ constructed by a formula verify LIP. For instance, $[n^t]$, $[log(n)]$, $[log(log(n))]$ belong to $\L$. Moreover, we consider it an nice hypothesis to work with. Since it is naturally associated to sub-exponential orders of growth, any proof involving LIP will use arguments intrinsic to the world of vanishing entropy that will not be an adaptation from the positive entropy setting. 

We would like to finish this introduction with some comments and open questions. Consider $M$ a compact metric space and $f:M\to M$ a homeomorphism. A simple consequence of the variational principle is that $h(f_{|\Omega(f)})=h(f)$. However, for generalized entropy it is only true if $o(f_{|\Omega(f)})\leq o(f)$. In order to have a clean notation, we call $o(f,\Omega(f))= o(f_{|\Omega(f)})$. The inequality must hold because the complexity of a subsystem is always less or equal than the system itself. On the other hand, it was observed in \cite{CoPu21} that the generalized entropy may ``spike" from $o(f,\Omega(f))$ to $o(f)$. Moreover, for the class of homeomorphisms worked in this article $\H$, every map verifies $o(f,\Omega(f))$ is the class of the constant sequence $[c]$ and $[n]\leq o(f) \leq \sup(\Pol)$. Therefore, $o(f,\Omega(f))< o(f)$. We consider that a possible path to understand these jumps in the general case is the following. Define $\hat M = M/\Omega(f)$, that is, collapse the non-wandering set of $f$ to a single point. The map $f$ induces a homeomorphism $\hat f: \hat M \to \hat M$, whose non-wandering set contains only one point. Now,  with the examples we have worked so far, the inequalities 
\[\max\{o(f,\Omega(f)), o(\hat f) \}\leq o(f) \leq o(f,\Omega(f))o(\hat f), \]
always seems to hold and we ask if this is true in general. It is simple to construct examples where one inequality is strict. If the answer is positive and $o(f,\Omega(f))=[c]$, then $o(f)= o(\hat f)$ because $[c]$ is the identity for the product in $\COG$. Therefore, a simpler version of our previous question is the following: If $o(f,\Omega(f))=[c]$, is $o(f)= o(\hat f)$?

This work is structured as follows:
\begin{itemize}
\item In section \ref{secPrelim}, we show the construction of the set of orders growth and generalized entropy. We also explain the coding of orbits for maps in $\H$, and prove some simple results about the bounded jump and linearly invariant properties. 
\item In section \ref{secTeoFlex}, we prove theorem \ref{teoFlex}. We split the proof in two cases. We address first the case when $o\in \B$ and then the general case $o\in \CB$.   
\end{itemize}
\section{Preliminaries}\label{secPrelim}

\subsection{Orders of growth and generalized entropy}\label{subOGGE}

Let us briefly recall how the complete set of the orders of growth and the generalized entropy of a map are defined in \cite{CoPu21}. First, we consider the space of non-decreasing sequences in $[0,\infty)$: $$\mathcal{O}=\{a:\mathbb{N}\rightarrow [0,\infty):a(n)\leq a(n+1),\, \forall n\in \mathbb{N}\}.$$
Next, we define the equivalence relationship $\approx$ in $\OR$ by $a(n)\approx b(n)$ if and only if there exist $c_1,c_2\in (0,\infty)$ such that $c_1 a(n)\leq b(n)\leq c_2 a(n)$ for all $n\in \mathbb{N}$. Since two sequences are related, if both have the same order of growth, we call the quotient space $\displaystyle \mathbb{O}=\mathcal{O}/_{\approx}$ the space of the orders of growth. If $a(n)$ belongs to $\mathcal{O}$, we are going to denote $[a(n)]$ as the associated class in $\mathbb{O}$. If a sequence is defined by a formula (for example, $n^2$), then the order of growth associated will be represented by the formula between the brackets ($[n^2]\in \mathbb{O}$).

We define in $\mathbb{O}$ a natural partial order. We say that $[a(n)] \leq [b(n)]$ if there exists $C>0$ such that $a(n) \leq Cb(n)$, for all $n\in\mathbb{N}$. Observe that $\leq$ is well defined. We consider $\overline{\mathbb{O}}$ the Dedekind-MacNeille completion. This is the smallest complete lattice that contains $\mathbb{O}$. In particular, it is uniquely defined and we will consider $\mathbb{O}\subset \overline{\mathbb{O}}$. We will also call $\overline{\mathbb{O}}$ the complete set of the orders of growth. 

Now, we proceed to define the generalized entropy of a dynamical system in the complete space of the orders of growth. Given $(M,d)$ is a compact metric space and $f:M\rightarrow M$ a continuous map, we define in $M$ the distance 
\[d^{f}_{n}(x,y)=\max \{d(f^k(x),f^k(y)); 0\leq k \leq n-1\},\]
 and we denote the dynamical ball as $B(x,n,\e)=\{y\in M; d^{f}_{n}(x,y)<\e\}$. A set $G\subset M$ is a $(n,\e)$-generator if $\displaystyle M=\cup_{x\in G} B(x,n,\e)$. We define $g_{f,\e}(n)$ as the smallest possible cardinality of a finite $(n,\e)$-generator. If we fix $\e>0$, then $g_{f,\e}(n)$ is a non-decreasing sequence of natural numbers, and thus, $g_{f,\e}(n) \in \mathcal{O}$. For a fixed $n$, if $\e_1<\e_2$, then $g_{f,\e_1} (n) \geq g_{f, \e_2}(n)$, and therefore, $[g_{f,\e_1}(n)]\geq [g_{f,\e_2}(n)]$ in $\mathbb{O}$. We define the generalized entropy of $f$ as 
\[o(f)=\text{\textquotedblleft}\lim_{\e\rightarrow 0}"[g_{f,\e}(n)] =\sup \{[g_{f,\e}(n)]\in \mathbb{O}:\e>0\}\in \overline{\mathbb{O}}. \]

This object is a dynamical invariant.
\begin{theorem}[Correa-Pujals \cite{CoPu21}]\label{TeoCoPu01}
	Let $M$ and $N$ be two compact metric spaces and $f:M\to M$ and $g:N\to N$ be two continuous maps. Suppose there exists $h:M\to N$, a homeomorphism, such that $h\circ f = g \circ h$. Then, $o(f)=o(g)$.
\end{theorem}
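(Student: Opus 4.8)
The plan is to exploit compactness to upgrade the conjugacy $h$ to a uniformly bicontinuous map and then to transport $(n,\e)$-generators back and forth between $M$ and $N$. Since $M$ and $N$ are compact metric spaces, both $h$ and $h^{-1}$ are uniformly continuous. Fix $\e>0$; by uniform continuity of $h$ there is $\delta=\delta(\e)>0$ such that $d_N(h(x),h(y))<\e$ whenever $d_M(x,y)<\delta$. The key elementary observation is that iterating the relation $h\circ f=g\circ h$ yields $h\circ f^k=g^k\circ h$ for every $k\ge 0$. Consequently, for $x,y\in M$,
\[d^{g}_{n}(h(x),h(y))=\max_{0\le k\le n-1} d_N\big(g^k(h(x)),g^k(h(y))\big)=\max_{0\le k\le n-1} d_N\big(h(f^k(x)),h(f^k(y))\big),\]
so $d^f_n(x,y)<\delta$ forces $d^g_n(h(x),h(y))<\e$. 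Writing $B_f(x,n,\delta)$ and $B_g(x,n,\e)$ for the dynamical balls of $f$ and $g$, this says $h\big(B_f(x,n,\delta)\big)\subseteq B_g(h(x),n,\e)$.

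Next I would translate generators. Let $G\subseteq M$ be a finite $(n,\delta)$-generator for $f$ with $\#G=g_{f,\delta}(n)$ (such a set exists by compactness). Since $M=\bigcup_{x\in G}B_f(x,n,\delta)$ and $h$ is surjective, $N=h(M)=\bigcup_{x\in G}h\big(B_f(x,n,\delta)\big)\subseteq\bigcup_{x\in G}B_g(h(x),n,\e)$, so $h(G)$ is a finite $(n,\e)$-generator for $g$. Hence $g_{g,\e}(n)\le \#h(G)\le g_{f,\delta}(n)$ for every $n\in\N$. Since this inequality holds for all $n$ with $\delta$ independent of $n$, it descends to the partial order on $\OG$: $[g_{g,\e}(n)]\le[g_{f,\delta(\e)}(n)]\le o(f)$. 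Taking the supremum over $\e>0$ on the left gives $o(g)=\sup_{\e>0}[g_{g,\e}(n)]\le o(f)$ in $\COG$.

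Finally, the argument is symmetric: from $h\circ f=g\circ h$ we obtain $h^{-1}\circ g=f\circ h^{-1}$, so the same reasoning with the roles of $f$ and $g$ interchanged (and $h^{-1}$ in place of $h$) gives $o(f)\le o(g)$; therefore $o(f)=o(g)$. I do not expect a genuine obstacle here. The only points requiring attention are that $\delta$ depends only on $\e$ and not on $n$ — which is exactly what uniform continuity on a compact space provides — and that the uniform-in-$n$ cardinality bound must be passed first through the quotient defining $\OG$ and then through the supremum defining the generalized entropy in $\COG$.
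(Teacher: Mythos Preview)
Your argument is correct and is the standard proof that generalized entropy is a conjugacy invariant. Note, however, that the present paper does not actually prove this theorem: it is stated here as a citation of a result from \cite{CoPu21}, with no proof given in this text, so there is nothing in the paper to compare your approach against. Your write-up is essentially what one expects the original proof in \cite{CoPu21} to be --- uniform continuity of $h$ and $h^{-1}$ on compact spaces, transport of $(n,\e)$-generators via $h\circ f^k = g^k\circ h$, and symmetry --- and all the care points you flag (that $\delta$ depends only on $\e$ and not on $n$, and that the inequality passes through the quotient and then the supremum) are handled properly.
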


We also define the generalized entropy through the point of view of $(n,\e)$-separated. We say that $E\subset M$ is $(n,\e)$-separated if $B(x,n,\e)\cap E = \{x\}$, for all $x\in E$. We define $s_{f,\e}(n)$ as the maximal cardinality of a $(n,\e)$-separated set. Analogously, if we fix $\e>0$, then $s_{f,\e}(n)$ is a non-decreasing sequence of natural numbers. Again, for a fixed $n$, if $\e_1<\e_2$, then $s_{f,\e_1} (n) \geq s_{f, \e_2}(n)$, and therefore, $[s_{f,\e_1}(n)]\geq [s_{f,\e_2}(n)]$. By standard arguments, it is verified that
\[o(f)=\sup \{[s_{f,\e}(n)]\in \mathbb{O}:\e>0\}.\]

The order of growth defined through open coverings is also equivalent. Consider $\U=\{U_1,\cdots, U_k\}$ an open covering of $M$ and define 
\[\U^n=\{U_{i_0}\cap f^{-1}(U_{i_1})\cap \cdots \cap f^{-n}(U_{i_n})\neq \emptyset: i_0,\cdots, i_n\in \{1,\cdots, k\}\},\]
which is also an open covering of $M$. Next, we define the sequence $a_{f,\U}(n)$ as the cardinal of the smallest sub-covering of $\U^n$. Then, it holds
\[o(f)=\sup\{[a_{f,\U}(n)]: \U\text{ is a finite open covering of }M\}.\]

Now, we shall explain how the generalized topological entropy is related to the classical notion of topological entropy and polynomial entropy. Given a dynamical system $f$, we recall that the topological entropy of $f$ is 
\[h(f) = \lim_{\e\to 0}\limsup_{n\rightarrow \infty} \frac{log(g_{f,\e}(n))}{n},\]
and the polynomial entropy of $f$ is
\[h_{pol}(f) = \lim_{\e\to 0}\limsup_{n\rightarrow \infty} \frac{log(g_{f,\e}(n))}{log(n)}.\]

We define the family of exponential orders of growth as the set $\mathbb{E}=\{[\exp(tn)]; t \in(0,\infty) \} \subset \mathbb{O}$ and the family of polynomial orders of growth as the set $\mathbb{P}=\{[n^t];t\in(0,\infty)\}$. Given $o\in \overline{\mathbb{O}}$ we define the intervals $I(o,\mathbb{E})=\{t\in (0,\infty):o\leq [\exp(tn)]\}$ and $I(o,\mathbb{P})=\{t\in (0,\infty):o\leq [n^t]\}$. With these intervals, we define the projections  $\pi_{\mathbb{E}}:\overline{\mathbb{O}}\rightarrow [0,\infty]$ and $\pi_{\mathbb{P}}:\overline{\mathbb{O}}\rightarrow [0,\infty]$ by
\[
\pi_{\mathbb{E}}(o)=
\begin{cases} \inf(I(o,\mathbb{E}))&\text{ if }I(o,\mathbb{E})\neq \emptyset \\
\infty &\text{ if } I(o,\mathbb{E})= \emptyset
\end{cases}
\] 
and 
\[
\pi_{\mathbb{P}}(o)=
\begin{cases} \inf(I(o,\mathbb{P}))&\text{ if }I(o,\mathbb{P})\neq \emptyset \\
\infty &\text{ if } I(o,\mathbb{P})= \emptyset
\end{cases}
\] 

The projection $\pi_{\Exp}$ verifies for elements in $\OG$, $\pi_{\Exp}([a(n)])=\limsup_{n\rightarrow \infty} \frac{log(a(n))}{n}$. Generalized entropy, polynomial entropy and classical entropy are related by the following result. 

\begin{theorem}[Correa-Pujals \cite{CoPu21}]\label{teo122}
	Let $M$ be a compact metric space and $f : M \rightarrow M$, a continuous map. Then, $\pi_\Exp(o(f))=h(f)$ and $\pi_\Pol(o(f))=h_{pol}(f)$.
\end{theorem}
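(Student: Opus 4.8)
The plan is to prove the two identities $\pi_\Exp(o(f))=h(f)$ and $\pi_\Pol(o(f))=h_{pol}(f)$ in parallel, exploiting that $h$ and $h_{pol}$ are built from the same family $\{g_{f,\e}(n)\}_{\e>0}$ by dividing $\log g_{f,\e}(n)$ by $n$, respectively by $\log n$. I would first record two elementary facts. (a) Both $\pi_\Exp$ and $\pi_\Pol$ are monotone on $\COG$: if $o_1\le o_2$ then any $[\exp(tn)]$ lying above $o_2$ also lies above $o_1$, so $I(o_2,\Exp)\subseteq I(o_1,\Exp)$ and hence $\pi_\Exp(o_1)\le\pi_\Exp(o_2)$; the same holds for $\pi_\Pol$. (b) For a single $[a(n)]\in\OG$ with $a(n)\ge 1$ one has $\pi_\Exp([a(n)])=\limsup_n\frac{\log a(n)}{n}$ (this is recorded in the excerpt) and, by the identical argument, $\pi_\Pol([a(n)])=\limsup_n\frac{\log a(n)}{\log n}$: if $t$ exceeds this last $\limsup$ then $a(n)\le n^t$ for all large $n$, and absorbing the finitely many remaining terms into a constant gives $a(n)\le C\,n^t$ for every $n$, so $[a(n)]\le[n^t]$; conversely $a(n)\le C\,n^t$ forces $\frac{\log a(n)}{\log n}\le\frac{\log C}{\log n}+t$, so the $\limsup$ is $\le t$.

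For the lower bound $\pi_\Exp(o(f))\ge h(f)$ I would use monotonicity directly. Since $o(f)=\sup\{[g_{f,\e}(n)]:\e>0\}$, fact (a) yields $\pi_\Exp(o(f))\ge\pi_\Exp([g_{f,\e}(n)])=\limsup_n\frac{\log g_{f,\e}(n)}{n}$ for every $\e>0$. The right-hand side is non-decreasing as $\e\downarrow 0$, because $g_{f,\e}(n)$ is non-increasing in $\e$ for each fixed $n$; taking the supremum over $\e$ therefore gives $\pi_\Exp(o(f))\ge\lim_{\e\to 0}\limsup_n\frac{\log g_{f,\e}(n)}{n}=h(f)$. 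The same three lines with $\log n$ in place of $n$ and $[n^t]$ in place of $[\exp(tn)]$ give $\pi_\Pol(o(f))\ge h_{pol}(f)$.

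For the reverse inequality, fix any $t>h(f)$. Then for every $\e>0$ we have $\limsup_n\frac{\log g_{f,\e}(n)}{n}\le h(f)<t$, so by fact (b) $[g_{f,\e}(n)]\le[\exp(tn)]$ in $\OG$. Since the inclusion $\OG\hookrightarrow\COG$ is an order embedding, $[\exp(tn)]$ is an upper bound in $\COG$ for the family $\{[g_{f,\e}(n)]:\e>0\}$, and because $o(f)$ is the \emph{least} upper bound of this family we conclude $o(f)\le[\exp(tn)]$, that is $t\in I(o(f),\Exp)$. As $t>h(f)$ was arbitrary, $\pi_\Exp(o(f))\le h(f)$, which combined with the previous paragraph gives $\pi_\Exp(o(f))=h(f)$. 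The polynomial case is word for word the same with $[n^t]$ replacing $[\exp(tn)]$. The degenerate cases cause no trouble: if $h(f)=\infty$ the lower bound already forces $\pi_\Exp(o(f))=\infty$, while if $h(f)=0$ the argument above shows $(0,\infty)\subseteq I(o(f),\Exp)$, hence $\pi_\Exp(o(f))=0$; the same remarks apply to $h_{pol}$.

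There is no deep obstacle: the statement is essentially the assertion that the scalar decay-rate functionals $\pi_\Exp$ and $\pi_\Pol$ are compatible with the supremum defining $o(f)$, which follows from their monotonicity and the explicit description of the order on $\OG$. The one point that must be handled with care is the interface with the completion: the inequality $o(f)\le[\exp(tn)]$ must be \emph{deduced} from the pointwise-in-$\e$ inequalities in $\OG$ via the order-embedding and least-upper-bound properties of $\COG$, not taken for granted, since $o(f)$ itself need not lie in $\OG$. The remaining technicality --- that the $\limsup$ condition controls $g_{f,\e}(n)$ only for large $n$, so finitely many initial terms are swept into the multiplicative constant implicit in the order relation --- is routine.
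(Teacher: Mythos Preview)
The paper does not include a proof of this theorem; it is stated as a result from \cite{CoPu21} and used as a preliminary. Your argument is correct and is the natural one: monotonicity of $\pi_\Exp$ and $\pi_\Pol$ together with the explicit formula $\pi_\Exp([a(n)])=\limsup_n\frac{\log a(n)}{n}$ (and its polynomial analogue) reduce both inequalities to routine manipulations, and you handle the passage through the completion $\COG$ correctly by invoking the least-upper-bound property of $o(f)$.
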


\subsection{Coding of orbits in wandering dynamics}

 Let us consider $M$ a compact metric space and $f:M\to M$ a homeomorphism such that $\Omega(f)=\{p_1,\cdots, p_k\}$. Let $\F$ be a finite family of non-empty subsets of $M\setminus \Omega(f)$. We denote by $\cup \F$ the union of all the elements of $\F$ and by $\infty_{\F}$ the complement of $\cup \F$. Let us fix a positive integer $n$ and consider $\underline{x}=(x_0,\cdots,x_{n-1})$ a finite sequence of points in $M$ and $\underline{w}=(w_0,\cdots,w_{n-1})$ a finite sequence of elements of $\F\cup \{\infty_{\F}\}$. We say that $\underline{w}$ is a coding of $\underline{x}$, relative to $\F$, if for every $i=0,\cdots, n-1$, $x_i\in w_i$. Whenever the family $\F$ is fixed, we simplify the notation by using $\infty$ instead of $\infty_{\F}$. Note that if the sets of $\F$ are not disjoint, we can have more than one coding for a given sequence. We denote the set of all the codings of all orbits $(x,f(x),\cdots, f^{n-1}(x))$ of length $n$ by $\A_{n}(f,\F)$. We define the sequence $c_{f,\F}(n)=\# \A_{n}(f,\F)$, and it is easy to see that $c_{f,\F}(n)\in \mathcal{O}$.

We say that:
\begin{itemize}
\item a set $Y$ is wandering if $f^n(Y)\cap Y =\emptyset$ for every $n\geq 1$. 
\item  $Y$ is a compact neighborhood if it is compact and is the closure of an open set. 
\item the subsets $Y_1,\cdots, Y_L$ of $M\setminus \{\Omega(f)\}$ are mutually singular if, for every $N>0$, there exists a point $x$ and times $n_1,\cdots, n_L$ such that $f^{n_i}(x)\in Y_i$ for every $i=1,\cdots, L$, and $|n_i -n_j|>N$ for every $i\neq j$. 
\end{itemize}

Let us call $\Sigma$ the family of finite families of wandering compact neighborhoods that are mutually singular. Given $\delta>0$, we define $\Sigma_\delta$ as the subset of $\Sigma$ formed by every family whose every element has a diameter smaller than $\delta$. 

\begin{theorem}[Correa - de Paula \cite{CoPa23}]\label{TeoCod}
Let $M$ be a compact metric space and $f:M\rightarrow M$ a homeomorphism such that $\Omega(f)$ is finite. Then, 
\[o(f)=\sup\{[c_{f,\F}(n)]\in \OG: \F\in \Sigma\}.\]
In addition, the equation also holds if we switch $\Sigma$ by $\Sigma_\delta$. 
\end{theorem}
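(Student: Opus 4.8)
The plan is to establish the two inequalities $[c_{f,\F}(n)]\le o(f)$ for every $\F\in\Sigma$, and $o(f)\le \sup\{[c_{f,\F}(n)]:\F\in\Sigma_\delta\}$; since $\Sigma_\delta\subseteq\Sigma$, these together yield both the $\Sigma$ and the $\Sigma_\delta$ versions of the statement at once. Throughout I would use the descriptions $o(f)=\sup\{[g_{f,\e}(n)]:\e>0\}=\sup\{[s_{f,\e}(n)]:\e>0\}$ from subsection \ref{subOGGE}, and the elementary fact that in $\OG$ one has $[a(n)+b(n)]=\max\{[a(n)],[b(n)]\}$, so that a bounded sum of orders of growth is their supremum.

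For the inequality $\sup\{[c_{f,\F}(n)]\}\le o(f)$, fix $\F=\{Y_1,\dots,Y_L\}\in\Sigma$. Two structural facts drive the bound. First, each $Y_j$ being wandering forces an orbit segment $(x,f(x),\dots,f^{n-1}(x))$ to meet $Y_j$ at most once (if $f^a(x),f^c(x)\in Y_j$ with $a<c$, then $f^{c-a}(Y_j)\cap Y_j\neq\emptyset$), so every orbit segment has at most $L$ coordinates $i$ with $w_i\neq\infty$, and then the label map is injective, giving at most $L^L$ codings per orbit segment. Second, $\cup\F$ is compact and disjoint from the finite set $\Omega(f)$, so for $\e$ small the compact set $\overline{B(\cup\F,\e)}$ lies in $M\setminus\Omega(f)$ and is covered by finitely many, say $P$, wandering open sets; by the same argument, any orbit visits $B(\cup\F,\e)$ at most $P$ times. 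Now take an $(n,\e)$-generator $G$ with $\#G=g_{f,\e}(n)$, fix for each coding a realizing orbit segment, and send the coding to a point of $G$ that $\e$-shadows it. If a coding maps to $g\in G$, each of its active coordinates $i$ satisfies $f^i(g)\in B(\cup\F,\e)$, so those coordinates lie in a set $T_g$ of at most $P$ integers depending only on $g$, and at each such $i$ the label is one of at most $L$ sets; hence at most $2^{P}L^{P}$ codings map to a given $g$. Therefore $c_{f,\F}(n)\le 2^{P}L^{P}\,g_{f,\e}(n)$, so $[c_{f,\F}(n)]\le[g_{f,\e}(n)]\le o(f)$.

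For the reverse inequality, fix $\e>0$, choose $\delta\in(0,\e/2)$, and set $M_\delta=M\setminus B(\Omega(f),\delta)$. As $M_\delta$ is compact and contained in $M\setminus\Omega(f)$, cover it by wandering compact neighborhoods $W_1,\dots,W_K$ of diameter $<\e/2$ lying in $M\setminus\Omega(f)$. Let $E$ be an $(n,\e)$-separated set with $\#E=s_{f,\e}(n)$; to each $x\in E$ attach a "pattern", namely at every time $i$ with $f^i(x)\in M_\delta$ a choice of $W_k\ni f^i(x)$. If two orbit segments from $E$ are $\e$-apart at a time $i$ with $f^i(x)\in M_\delta$ or $f^i(y)\in M_\delta$, their patterns differ, since a set of diameter $<\e/2$ cannot contain both points; because $\delta<\e/2$, the only way to miss this is that at every time witnessing the $\e$-separation both orbits sit in $B(\Omega(f),\delta)$ near distinct points of $\Omega(f)$. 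The plan is to absorb this residual case into a decomposition of $E$ into a bounded number $R$ of classes — indexed by discrete data attached to the orbit near $\Omega(f)$, such as the periodic orbits $\alpha(x),\omega(x)\subseteq\Omega(f)$ together with a choice of trapping/filtrating neighborhoods of $\Omega(f)$ into which the orbit becomes committed (available since $\Omega(f)$ is finite) — so that within each class the $\e$-separation is always witnessed in $M_\delta$, hence by the patterns. One then checks that the patterns occurring in a given class are precisely the codings relative to some mutually singular subfamily $\F_r\subseteq\{W_1,\dots,W_K\}$ of diameter $<\e$ (mutual singularity being exactly the condition ensuring that every combinatorially admissible pattern is a genuine coding of an orbit, and conversely), so that $s_{f,\e}(n)=\sum_{r=1}^{R}\#E_r\le\sum_{r=1}^{R}C\,c_{f,\F_r}(n)$ and therefore $[s_{f,\e}(n)]\le\max_r[c_{f,\F_r}(n)]\le\sup\{[c_{f,\F}(n)]:\F\in\Sigma_\delta\}$. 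Taking the supremum over $\e$ finishes this direction, and combined with the previous paragraph gives $\sup_{\Sigma_\delta}=\sup_{\Sigma}=o(f)$.

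I expect the middle step of the last paragraph — organizing the finite cover $\{W_k\}$ together with the behaviour of orbits near the finite set $\Omega(f)$ into boundedly many classes, each faithfully coded by a single mutually singular family — to be the main obstacle. This is where the hypothesis that $\Omega(f)$ is finite is genuinely used (via Conley-type trapping regions, so that entering a small neighbourhood of $\Omega(f)$ pins down the asymptotic fate of the orbit), and where the precise definition of mutual singularity must be matched to the combinatorics of the patterns; in particular one must rule out that non-mutually-singular subfamilies contribute a pattern count growing with $n$. By comparison, the inequality $\sup\{[c_{f,\F}(n)]\}\le o(f)$ and all the bookkeeping with orders of growth are routine.
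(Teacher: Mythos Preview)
This theorem is not proved in the present paper: it is quoted from \cite{CoPa23} and used here as a black box, so there is no ``paper's own proof'' to compare your proposal against. What follows is therefore an assessment of your sketch on its own merits.

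Your upper bound $[c_{f,\F}(n)]\le o(f)$ is clean and correct. The two ingredients---that a wandering set is visited at most once by any orbit, and that a compact subset of $M\setminus\Omega(f)$ is covered by finitely many wandering open sets---are exactly what is needed, and the pigeonhole count over the generator set is fine (your constant $2^{P}L^{P}$ dominates the actual $(1+L)^{P}$).

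The reverse direction is, as you yourself flag, only a plan, and the gap is real. The step that is not carried out is the passage from ``patterns in the cover $\{W_k\}$'' to ``codings relative to a mutually singular family $\F_r$''. Two things need to be done and neither is in the sketch: (i) show that within each of your $R$ classes the relevant subfamily of $\{W_k\}$ is genuinely mutually singular in the sense defined (for every $N$ there is a single orbit visiting all the sets with pairwise time gaps $>N$)---nothing in your decomposition by $\alpha(x),\omega(x)$ and trapping regions forces this; and (ii) show that distinct patterns in a class really give distinct codings, i.e.\ control the multiplicity when several $W_k$ overlap or when separation happens near $\Omega(f)$. Your parenthetical ``mutual singularity being exactly the condition ensuring that every combinatorially admissible pattern is a genuine coding'' overstates what the definition gives you: mutual singularity guarantees one orbit with large gaps, not that arbitrary patterns are realized. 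Working this out is the substance of the proof in \cite{CoPa23}, and it does require a careful filtration argument near the finite set $\Omega(f)$ along the lines you indicate; but the matching between that filtration and the precise notion of mutual singularity is where the work lies, and it is absent here.
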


\subsection{Bounded Jump property}\label{subBJP}

We say that a class of orders of growth $[a(n)]$ verifies the bounded jump property (BJP) if there exists a constant $C>0$ such that $a(n+1)\leq Ca(n)$. Note that this definition does not depend on the choice of the class representative. We call $\B\subset \OG$ the set of orders of growth that verify the BJP and $\CB$ the subset in $\COG$ defined by:
\[\CB=\{sup(\Gamma)\in \COG:\Gamma\subset\B\text{ and }\Gamma\text{ is countable}\}.\]
The set $\CB$ is the countable completion of $\B$.

\begin{proposition}\label{propB}
Let us consider $M$ a compact metric space and $f:M\to M$ a homeomorphism. Then, it is verified
\begin{enumerate}
\item For $\U$ a finite covering of $M$,  $[a_{f,\U}(n)]\in \B$.  
\item  $o(f)\in \CB$.
\end{enumerate}  
\end{proposition}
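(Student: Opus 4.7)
The plan is to handle the two parts in sequence, relying on the open-covering formulation of $o(f)$ recalled in Subsection \ref{subOGGE}.

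For (1), I would exploit the recursive structure of the refined covers: each element of $\U^{n+1}$ has the form $W\cap f^{-n}(U)$ with $W\in\U^{n}$ and $U\in\U$. Starting from a minimal sub-cover $\{W_1,\dots,W_{a_{f,\U}(n)}\}$ of $\U^{n}$, the family $\{W_j\cap f^{-n}(U): 1\le j\le a_{f,\U}(n),\ U\in\U\}$ covers $M$ (given $x\in M$, choose $j$ with $x\in W_j$ and $U\in\U$ with $f^{n}(x)\in U$) and is contained in $\U^{n+1}$. Hence $a_{f,\U}(n+1)\le |\U|\cdot a_{f,\U}(n)$, so $[a_{f,\U}(n)]$ satisfies BJP with constant $C=|\U|$, which is independent of $n$.

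For (2), the strategy is to exhibit a countable cofinal family of finite open covers, turning $o(f)=\sup\{[a_{f,\U}(n)]:\U\text{ finite open cover}\}$ into a countable supremum. For each $k\in\N$, use compactness of $M$ to extract a finite sub-cover $\U_k$ of the cover by balls of radius $1/k$. Given any finite open cover $\U$ of $M$, let $\lambda>0$ be a Lebesgue number of $\U$; as soon as $2/k<\lambda$ each ball of radius $1/k$ lies in some element of $\U$, so $\U_k$ refines $\U$. When $\V$ refines $\U$ a routine argument gives $a_{f,\U}(n)\le a_{f,\V}(n)$: replace each element $V_{i_0}\cap f^{-1}(V_{i_1})\cap\dots\cap f^{-n}(V_{i_n})$ of a minimal sub-cover of $\V^{n}$ by a containing element of $\U^{n}$. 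Thus $[a_{f,\U}(n)]\le [a_{f,\U_k}(n)]$ for $k$ large, and $o(f)=\sup_{k\in\N}[a_{f,\U_k}(n)]$ is a countable supremum. Since each $[a_{f,\U_k}(n)]\in\B$ by (1), the definition of the countable completion gives $o(f)\in\CB$.

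I do not anticipate any serious obstacle: both parts reduce to combinatorics of refinement of open covers together with compactness of $M$. The only step requiring attention is the cofinality of the sequence $(\U_k)_{k\in\N}$ in the refinement order, which is exactly the standard Lebesgue-number argument.
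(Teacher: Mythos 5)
Your argument is correct, and part (1) coincides with the paper's proof: a minimal sub-cover of $\U^n$ is refined by intersecting with preimages of the elements of $\U$, giving $a_{f,\U}(n+1)\le \#\U\, a_{f,\U}(n)$ (only note that with the paper's convention $\U^{n}$ already involves $f^{-n}$, so the extra factor should be $f^{-(n+1)}(U)$ rather than $f^{-n}(U)$; this indexing slip does not affect the bound). For part (2) you take a genuinely different, though equally valid, route. The paper fixes finite covers $\U_k$ with $diam(\U_k)\le 1/k$ and invokes Lemma A.1 of \cite{CoPu21}, which gives $[a_{f,\U}(n)]\ge [s_{f,\e}(n)]$ whenever $diam(\U)\le\e$; combined with $o(f)=\sup_\e [s_{f,\e}(n)]$ and the covering formulation this immediately shows that the countable family $\{[a_{f,\U_k}(n)]\}_k$ realizes $o(f)$. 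You instead stay entirely inside the open-cover formulation: you prove monotonicity of $a_{f,\cdot}(n)$ under refinement and use the Lebesgue number lemma to show that the covers by $1/k$-balls are cofinal in the refinement order, so the supremum over all finite covers equals the countable supremum over the $\U_k$. Your version is self-contained (it does not rely on the external comparison with separated sets), at the cost of carrying out the routine refinement and cofinality checks; the paper's version is shorter because it delegates exactly that comparison to a lemma already available in \cite{CoPu21}. Both correctly reduce $o(f)$ to a countable supremum of classes in $\B$, whence $o(f)\in\CB$.
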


\begin{proof}
We shall begin with proving the result for open coverings. Consider $\U=\{U_1,\cdots, U_k\}$ a finite open covering of $M$ and fix $n\in \N$. Suppose $\V\subset \U^n$ is a sub-covering such that $a_{f,\U}(n)=\# \V$. Then, 
$\V'=\{V\cap f^{-(n+1)}(U_i):V\in \V, U_i\in \U\}\subset \U^{n+1}$ is also a sub-covering with at most $\#\U\#\V$ elements. Therefore, $ a_{f,\U}(n+1)\leq \#\U a_{f,\U}(n)$. 

Now that we know $[a_{f,\U}(n)]\in \B$, we only need to show that $o(f)$ is obtained as the supremum of a countable family of open coverings. By lemma A.1 in \cite{CoPu21}, given $\e>0$ if $diam(\U)\leq \e$, then $[a_{f,\U}(n)]\geq [s_{f,\e}(n)]$. Therefore, if $\{\U_k\}_{k\in\N}$ is a family of open coverings such that $diam(\U_k)\leq \frac{1}{k}$, then $o(f)=\sup\{[a_{f,\U_k}(n)]:k\in\N\}$. 
\end{proof}

In \cite{CoPa23}, we proved an analogous result for $[c_{f,\F}(n)]$.

\begin{lemma}[Correa - de Paula \cite{CoPa23}]\label{LemBouJum}
Let us consider $M$ a compact metric space and $f:M\to M$ a homeomorphism whose non-wandering set is finite. Given $\F\in \Sigma$, the class $[c_{f,\F}(n)]\in \B$.
\end{lemma}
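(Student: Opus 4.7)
The plan is to exhibit an explicit constant $C>0$ such that $c_{f,\F}(n+1)\leq C\cdot c_{f,\F}(n)$ for every $n\in\N$, which is exactly the bounded jump property. My candidate is $C=\#\F+1$, and the argument is a simple truncation-and-fiber count.

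First, I would introduce the restriction map $\tau:\A_{n+1}(f,\F)\to \A_n(f,\F)$ defined by
\[\tau(w_0,\ldots,w_n)=(w_0,\ldots,w_{n-1}).\]
This map is well-defined: if $(w_0,\ldots,w_n)$ codes the orbit $(x,f(x),\ldots,f^n(x))$, meaning $f^i(x)\in w_i$ for $i=0,\ldots,n$, then restricting to $i\leq n-1$ shows that $(w_0,\ldots,w_{n-1})$ codes the orbit $(x,f(x),\ldots,f^{n-1}(x))$, hence lies in $\A_n(f,\F)$.

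Next, I would bound the fibers. Any element of $\tau^{-1}(\underline{w}')$, for $\underline{w}'=(w_0,\ldots,w_{n-1})\in \A_n(f,\F)$, is obtained from $\underline{w}'$ by appending a single symbol $w_n\in \F\cup\{\infty\}$. Since $\#(\F\cup\{\infty\})=\#\F+1$, we get $\#\tau^{-1}(\underline{w}')\leq \#\F+1$ for every $\underline{w}'$. Summing over the base yields
\[c_{f,\F}(n+1)=\sum_{\underline{w}'\in \A_n(f,\F)}\#\tau^{-1}(\underline{w}')\leq (\#\F+1)\,c_{f,\F}(n),\]
which is the desired bound with $C=\#\F+1$.

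There is no genuine obstacle here; the argument is pure finite combinatorics, and in fact uses neither wanderingness, nor that the elements of $\F$ are compact neighborhoods, nor mutual singularity. Those hypotheses on $\F$ are what make $[c_{f,\F}(n)]$ useful in Theorem \ref{TeoCod} for computing $o(f)$, but the BJP bound on $c_{f,\F}(n)$ itself is independent of them and depends only on the finiteness of the alphabet $\F\cup\{\infty\}$.
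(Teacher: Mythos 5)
Your proof is correct and follows essentially the same route as the paper's (cf.\ the analogous argument for open coverings in Proposition \ref{propB}(1)): a word in $\A_{n+1}(f,\F)$ is a word of $\A_n(f,\F)$ extended by one symbol from the finite alphabet $\F\cup\{\infty\}$, so $c_{f,\F}(n+1)\leq (\#\F+1)\,c_{f,\F}(n)$, which is exactly the BJP with $C=\#\F+1$. Your closing observation is also accurate: only finiteness of $\F$ is used here, while wandering, compactness and mutual singularity are needed only so that $[c_{f,\F}(n)]$ computes $o(f)$ via Theorem \ref{TeoCod}.
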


The next result show how we use the BJP. We recall that a set $\S\subset \N$ is syndetic if there exists $N\in \N$ such that for all $n$, the interval $[n,n+N]$ contains at least one point of $\S$. 

\begin{lemma}[Correa - de Paula \cite{CoPa23}]\label{LemSyn}
Let us consider $[a(n)]\in \B$, a syndetic set $\S$ and a sequence $b(n)\in \OR$. If there exist two constants $c_1$ and $c_2$ such that $c_1 b(n)\leq a(n)\leq c_2 b(n)$ for all $n\in \S$, then $[a(n)] =[b(n)]$. 
\end{lemma}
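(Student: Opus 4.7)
The plan is to exploit two features working in tandem: syndeticity places every $n\in\N$ within bounded distance from $\S$, and BJP lets us transport values of $a$ across that bounded distance with only a multiplicative constant loss. Since $\S$ is syndetic, fix $N\in\N$ such that every interval $[k,k+N]$ meets $\S$. Let $C>0$ be a BJP constant for $a$, so that $a(n+1)\le Ca(n)$ and hence $a(n+j)\le C^j a(n)$ for every $j\ge 0$ by iteration. Note also that $a,b\in\OR$ are non-decreasing, which we will use repeatedly.

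For the inequality $b(n)\le D_1 a(n)$, I would fix any $n\in\N$ and choose $m\in\S\cap [n,n+N]$. Then combining monotonicity of $b$, the hypothesis at $m\in\S$, and BJP for $a$, I get
\[
b(n)\ \le\ b(m)\ \le\ \frac{1}{c_1}\,a(m)\ \le\ \frac{C^{m-n}}{c_1}\,a(n)\ \le\ \frac{C^N}{c_1}\,a(n).
\]
For the reverse inequality $a(n)\le D_2 b(n)$, I would instead choose, for $n\ge N$, some $m\in\S\cap[n-N,n]$. Now BJP is applied first (to move up from $m$ to $n$), then the hypothesis, and then monotonicity of $b$:
\[
a(n)\ \le\ C^{n-m}\,a(m)\ \le\ C^N\,c_2\,b(m)\ \le\ C^N c_2\,b(n).
\]
The finitely many small indices $n<N$ contribute only boundedly many extra values, which I can absorb by enlarging the constants $D_1, D_2$ (using that $a(1),\ldots,a(N)$ and $b(1),\ldots,b(N)$ are fixed finite data; the edge case where $a$ or $b$ vanish at the start is trivial because BJP forces $a$ to be identically zero as soon as $a(1)=0$).

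There is no serious obstacle in this proof — it is entirely mechanical once one sees which side of $n$ to pick the syndetic representative on. The only slightly delicate point is the \emph{asymmetry}: I must apply BJP to $a$ (the sequence we have it for) and monotonicity to $b$ (for which BJP is not assumed). That forces the two inequalities to use syndetic points on opposite sides of $n$, namely $m\ge n$ when bounding $b(n)$ from above and $m\le n$ when bounding $a(n)$ from above. Once this asymmetry is respected, the lemma follows in two lines each.
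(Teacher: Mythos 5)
This lemma is quoted from \cite{CoPa23} and not proved in the present paper, but your argument is correct and is exactly the expected one: syndeticity gives a point of $\S$ within distance $N$ of any $n$, the BJP constant $C^N$ transports $a$ across that gap, and monotonicity is applied to $b$, with the syndetic representative chosen on the correct side of $n$ in each of the two inequalities. The one small caveat is your edge-case remark: the delicate degeneracy is $b$ (not $a$) vanishing at small indices while $a$ does not, which is ruled out by the convention that orders of growth are classes of non-decreasing sequences in $(0,\infty)$; with positivity the finitely many indices $n<N$ are absorbed into the constants exactly as you say.
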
 

This lemma tell us that if we know an order of growth in a syndetic set and said order of growth verifies the BJP, then we understand the order of growth in $\N$. For example, if we prove that $c_1 n^k\leq c_{f,\F}(n)\leq c_2 n^k$ for every  $n\in \S$, then $[c_{f,\F}(n)]=[n^k]$.

\subsection{Linearly invariant property} \label{subLIP}

We say that a class of orders of growth $[a(n)]$ verifies the linearly invariant property if there exists an integer $m\geq 2$ such that $[a(n)]= [a(mn)]$. We call $\L\subset \OG$ as the set of orders of growth that verify the LIP and $\CL$ the subset in $\COG$ defined by:
\[\CL=\{sup(\Gamma)\in \COG:\Gamma\subset\L\text{ and }\Gamma\text{ is countable}\}.\]
The set $\CL$ is the countable completion of $\L$. 

Orders of growth in $\L$ verify the following properties

\begin{proposition}\label{propLIPProp}
Let $[a(n)]\in \L$. Then, it is verified:
\begin{enumerate}
\item $[a(n)]\in \B$.
\item For all integer $m\geq 2$, it is verified $[a(mn)]=[a(n)]$.
\item $\pi_{\Exp}([a(n)])=0$.
\end{enumerate} 
\end{proposition}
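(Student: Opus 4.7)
The three statements essentially unfold from the single inequality $a(mn) \leq C\, a(n)$ (for all $n$), which is the content of $[a(n)] = [a(mn)]$, together with the fact that $a$ is non-decreasing. I would prove them in the order presented.

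For part (1), the observation is that for $m \geq 2$ and any $n \geq 1$ we have $n+1 \leq mn$, so monotonicity of $a$ gives
\[
a(n+1) \;\leq\; a(mn) \;\leq\; C\, a(n),
\]
which is exactly the bounded jump property. No further work is needed.

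For part (2), I would iterate the LIP hypothesis. A straightforward induction on $k$, using $a(m^{k+1} n) = a(m \cdot m^k n) \leq C\, a(m^k n)$, gives $a(m^k n) \leq C^k a(n)$ for every $k \geq 1$, so $[a(m^k n)] = [a(n)]$ in $\OG$. Now fix an arbitrary integer $m' \geq 2$ and choose $k$ with $m^k \geq m'$. Monotonicity of $a$ then yields
\[
a(n) \;\leq\; a(m'n) \;\leq\; a(m^k n) \;\leq\; C^k a(n),
\]
and taking classes produces $[a(m'n)] = [a(n)]$, completing the step from a single $m$ to every integer $\geq 2$.

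For part (3), I would use the exponential inequality $a(m^k) \leq C^k a(1)$ (a special case of the iteration above with $n=1$) to bound $\log a(N)/N$. Given $N$, pick $k$ with $m^k \leq N < m^{k+1}$; then monotonicity gives $a(N) \leq a(m^{k+1}) \leq C^{k+1} a(1)$, hence
\[
\frac{\log a(N)}{N} \;\leq\; \frac{(k+1)\log C + \log a(1)}{m^k}.
\]
As $N \to \infty$ we have $k \to \infty$ and the right-hand side tends to $0$, so $\limsup_{N \to \infty} \log a(N)/N \leq 0$. Combining this with the identification $\pi_{\Exp}([a(n)]) = \limsup \log a(n)/n$ from subsection \ref{subOGGE} yields $\pi_{\Exp}([a(n)]) = 0$. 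The degenerate case where $a$ is eventually zero is trivial since then $[a(n)] \leq [\exp(tn)]$ for every $t > 0$. No step looks like a real obstacle; the mild care required is just making sure the constants produced in the iteration of (2) stay uniform in $n$, which they do because $C$ from LIP does not depend on $n$.
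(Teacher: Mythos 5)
Your proofs of (1) and (2) are essentially the paper's own: (1) is the same one-line estimate $a(n+1)\le a(mn)\le C\,a(n)$, and (2) is the same scheme of iterating LIP to the powers $m^k$ and then sandwiching an arbitrary integer $m'\ge 2$ via monotonicity, $a(n)\le a(m'n)\le a(m^kn)\le C^k a(n)$ (the paper writes this with base $2$; your version with the general base $m$ given by the hypothesis is the same argument and, if anything, matches the stated hypothesis more literally). For (3) you take a genuinely different route: you convert the iterate $a(m^kn)\le C^ka(n)$ into an explicit growth bound, $a(N)\le C^{k+1}a(1)$ for $m^k\le N<m^{k+1}$, which is a polynomial-type bound (up to constants $[a(n)]\le [n^{\log_m C}]$) and gives $\limsup_N \log a(N)/N=0$ directly, hence $\pi_{\Exp}([a(n)])=0$. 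The paper instead argues at the level of the projection, using $\pi_{\Exp}([a(n)])=\pi_{\Exp}([a(mn)])=m\limsup_n \log a(mn)/(mn)$ and comparing this with $m\,\pi_{\Exp}([a(n)])$ to conclude from $m\ge 2$. Your computation is more self-contained: it does not rely on relating the limsup along the subsequence $\{mn\}$ to the full limsup, nor on the finiteness of $\pi_{\Exp}([a(n)])$, which are what make the paper's scaling relation yield $\pi_{\Exp}=0$; as a by-product it shows that LIP actually forces at most polynomial growth, which is sharper than item (3). The paper's argument is shorter and stays within the $\pi_{\Exp}$ formalism. Two cosmetic points in your write-up: normalize $C\ge 1$ (or take the positive part of the numerator) so the displayed bound is valid even when $(k+1)\log C+\log a(1)<0$, and note that a non-decreasing sequence that is ``eventually zero'' is identically zero, so that degenerate case is vacuous; neither affects the argument.
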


\begin{proof}
$(1)$. The BJP is equivalent to $[a(n)]=[a(n+1)]$. Suppose $[a(mn)]=[a(n)]$ for some $m\geq 2$. Since $a(n)\leq a(n+1) \leq a(mn)$ for $n\geq 1$, we infer  $[a(n)]=[a(n+1)]$.

$(2)$. Let us suppose $[a(2n)]=[a(n)]$ and consider $c_2>c_1>0$ such that $c_1 \leq \frac{a(2n)}{a(n)} \leq  c_2$. Observe that
\[c_1^2 \leq \frac{a(4n)}{a(2n)} \frac{a(2n)}{a(n)}\leq c_2^2\ \forall n\in \N.\]
From this, we deduce that $[a(4n)]=[a(n)]$ and by induction, we conclude $[a(2^k n)]=[a(n)]$. Now, we fix $m\in \N$ and choose $2^k> m$. Since $a(n)\leq a(mn) \leq a(2^kn)$ and $[a(2^k n)]=[a(n)]$ we see
$[a(mn)]=[a(n)]$.

$(3)$. It is simple to observe that
\begin{align*}
 \pi_{\Exp}([a(n)])  & = \pi_{\Exp}([a(mn)])\\
 & =  \limsup_n \frac{log(a(mn))}{n} \\
  &= m \limsup_n \frac{log(a(mn))}{mn} \\
  &\leq m  \limsup_n \frac{log(a(n))}{n} \\
  & =  m\pi_{\Exp}([a(n)]).
\end{align*}
However, $m\geq 2$ and therefore, $\pi_{\Exp}([a(n)])=0$. 
\end{proof}

A similar reasoning as the one done in proposition \ref{propB} and the third property of the previous proposition implies the following result. 

\begin{proposition}
Consider $f:M\to M$ a dynamical system such that $[a_{f,\U}(n)]\in \L$ for every finite open covering  $\U$. Then,
\begin{enumerate}
\item $o(f)\in \CL$, 
\item and $h(f)=0$. 
\end{enumerate}
\end{proposition}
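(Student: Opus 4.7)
The plan is to mimic the argument used in the proof of proposition \ref{propB}, replacing at the key step the bounded jump property with its stronger LIP counterpart, and then combining it with statement $(3)$ of proposition \ref{propLIPProp}.

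For part $(1)$, I would first fix, exactly as in the proof of proposition \ref{propB}, a countable family $\{\U_k\}_{k\in\N}$ of finite open coverings of $M$ with $diam(\U_k)\leq 1/k$. The argument given there (using lemma A.1 of \cite{CoPu21}) shows
\[ o(f) = \sup\{[a_{f,\U_k}(n)] : k\in\N\}. \]
By hypothesis every $[a_{f,\U_k}(n)]$ lies in $\L$, so $o(f)$ is the supremum of a countable subset of $\L$, which is precisely the defining condition for membership in $\CL$.

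For part $(2)$, I would appeal to theorem \ref{teo122}, which reduces $h(f)=0$ to the identity $\pi_{\Exp}(o(f))=0$. By part $(3)$ of proposition \ref{propLIPProp}, $\pi_{\Exp}([a_{f,\U_k}(n)])=0$ for every $k$; since the set $I(o,\Exp)$ is upward closed, this forces $[a_{f,\U_k}(n)]\leq [\exp(tn)]$ for every $t>0$ and every $k$. Hence $[\exp(tn)]$ is, for every $t>0$, an upper bound in $\COG$ of the countable family whose supremum is $o(f)$, so $o(f)\leq [\exp(tn)]$ for every $t>0$, giving $\pi_{\Exp}(o(f))=0$ and therefore $h(f)=0$.

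I do not expect any real obstacle: the content of the statement is simply that the mechanism of proposition \ref{propB} runs verbatim with LIP in place of BJP, and that $\pi_{\Exp}$ commutes with countable suprema through the upward closedness of $I(o,\Exp)$. The only point that deserves a careful line in the write-up is this last commutation property, which is immediate from the definitions but worth spelling out so that the passage from ``each $\pi_{\Exp}([a_{f,\U_k}(n)])=0$'' to ``$\pi_{\Exp}(o(f))=0$'' is transparent.
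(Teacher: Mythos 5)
Your proposal is correct and follows exactly the route the paper intends: the paper gives no detailed proof, only the remark that one repeats the reasoning of proposition \ref{propB} (with the countable family of coverings of diameter $\leq 1/k$) and invokes item $(3)$ of proposition \ref{propLIPProp} together with theorem \ref{teo122}. Your write-up simply fills in those details, including the harmless but worthwhile observation that each $[a_{f,\U_k}(n)]\leq[\exp(tn)]$ for all $t>0$ passes to the supremum, so nothing needs to change.
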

\section{Proof of theorem \ref{teoFlex}}\label{secTeoFlex}

In this section, we shall prove theorem \ref{teoFlex}. For $o=[n]$, the result holds because for any translation in the plane, the compactification  $f$  verifies $o(f)=[n]$. For this type of maps, all the mutually singular sets consist of a single set and by theorem \ref{TeoCod}, we conclude the existence of a map such that $o(f)=[n]$. 

Now, we are only interested in the case $[n^2]\leq o \leq \sup(\Pol)$. First, we are going to prove the result for the particular case $o\in \L$, and then, adapt to obtain the general result ($o\in\CL$).

\subsection{Construction of the map}

We are going to recall the construction done in \cite{HaRo19} that will provide us a homeomorphism $f:S^2\to S^2$. Let us fix $[a(n)]\in \L$ such that $[n^2]< [a(n)]< \sup(\Pol)$. For said  $[a(n)]$, there must exist $L\geq 3$ such that $[a(n)]\leq [n^L]$. From now on, $L$ is going to be fixed. 

Let us consider $P_1,\cdots, P_{L+2}$ copies of the plane $\R^2$. For each $P_i$ we denote $O_i=\{(x,y)\in P_i: y>0\}$ the upper half plane. For each $i=1,\cdots, L+1$ we are going to select maps $\varphi_i:(0,\infty)\to \R$ and with them, define $\Phi_i:O_i\to O_{i+1}$ by
$\Phi_i(x,y)=(x+\varphi_i(y),y)$.

The first property we want for $\varphi_i$ is the limit in $0$ be $-\infty$. Now, we define $\sim$ the equivalence relation in $\cup P_i$ generated by $(x,y)\sim \Phi_i(x,y)$ if $(x,y)\in O_i$. The quotient space $P=\cup P_i/\sim$  is a Hausdorff simply connected non-compact surface, and thus is homeomorphic to the plane. Figure \ref{imgPlaneP} represents the quotient space $P$.

\begin{figure}[h!]
\begin{center}
\tikzset{every picture/.style={line width=0.75pt}} 

\begin{tikzpicture}[x=0.75pt,y=0.75pt,yscale=-1,xscale=1]

\draw    (211.57,59.57) .. controls (245.86,67.29) and (280.71,59.57) .. (281,20.14) ;
\draw [shift={(249.86,60.59)}, rotate = 343.87] [fill={rgb, 255:red, 0; green, 0; blue, 0 }  ][line width=0.08]  [draw opacity=0] (12,-3) -- (0,0) -- (12,3) -- cycle    ;
\draw    (289.29,19.86) .. controls (289.57,58.43) and (324.71,68.14) .. (360.43,60.14) ;
\draw [shift={(305.55,54.01)}, rotate = 30.43] [fill={rgb, 255:red, 0; green, 0; blue, 0 }  ][line width=0.08]  [draw opacity=0] (12,-3) -- (0,0) -- (12,3) -- cycle    ;
\draw    (359,70.14) .. controls (319.29,74.43) and (313.86,112.43) .. (337,142.14) ;
\draw [shift={(327.05,89.52)}, rotate = 111.71] [fill={rgb, 255:red, 0; green, 0; blue, 0 }  ][line width=0.08]  [draw opacity=0] (12,-3) -- (0,0) -- (12,3) -- cycle    ;
\draw    (233.57,142.14) .. controls (253.57,116.71) and (249.86,74.43) .. (210.71,69.86) ;
\draw [shift={(245.42,106.58)}, rotate = 264.24] [fill={rgb, 255:red, 0; green, 0; blue, 0 }  ][line width=0.08]  [draw opacity=0] (12,-3) -- (0,0) -- (12,3) -- cycle    ;
\draw    (330.14,149) .. controls (315.29,113.86) and (253,112.43) .. (240.95,150.14) ;
\draw [shift={(293.56,122.96)}, rotate = 185.87] [fill={rgb, 255:red, 0; green, 0; blue, 0 }  ][line width=0.08]  [draw opacity=0] (12,-3) -- (0,0) -- (12,3) -- cycle    ;

\draw (240.29,33.46) node [anchor=north west][inner sep=0.75pt]    {$P_{1}$};
\draw (219.71,90.03) node [anchor=north west][inner sep=0.75pt]    {$P_{2}$};
\draw (276.86,129.54) node [anchor=north west][inner sep=0.75pt]    {$P_{3}$};
\draw (333.43,97.74) node [anchor=north west][inner sep=0.75pt]    {$P_{4}$};
\draw (248.86,8.32) node [anchor=north west][inner sep=0.75pt]    {$-\infty $};
\draw (290.57,8.32) node [anchor=north west][inner sep=0.75pt]    {$+\infty $};
\draw (192.86,70.97) node [anchor=north west][inner sep=0.75pt]    {$-\infty $};
\draw (336.86,124.4) node [anchor=north west][inner sep=0.75pt]    {$-\infty $};
\draw (349.71,40.03) node [anchor=north west][inner sep=0.75pt]    {$-\infty $};
\draw (348.86,74.4) node [anchor=north west][inner sep=0.75pt]    {$+\infty $};
\draw (204.57,123.83) node [anchor=north west][inner sep=0.75pt]    {$+\infty $};
\draw (194.86,40.69) node [anchor=north west][inner sep=0.75pt]    {$+\infty $};
\draw (242.86,144.83) node [anchor=north west][inner sep=0.75pt]    {$-\infty $};
\draw (297.71,144.32) node [anchor=north west][inner sep=0.75pt]    {$+\infty $};
\draw (314.57,34.46) node [anchor=north west][inner sep=0.75pt]    {$P_{5}$};

\end{tikzpicture}
\caption{Gluing of the plane $P$ for $L=3$.} \label{imgPlaneP}
\end{center} 
\end{figure}
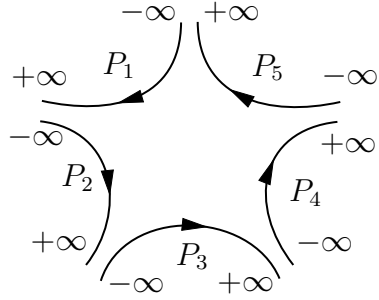

Let $T:\cup P_i\to \cup P_i$ be defined as the translation $T(x,y)=(x+1,y)$ on each $P_i$.  The map $T$ commutes with each $\Phi_i$ and therefore it defines $F:P\to P$ as an orientation preserving homeomorphism on the plane without fixed points. The map $F$ induces a homeomorphism $f$ in the compactification of the plane $\R^2\cup \{\infty\}= S^2$, which verifies $\Omega(f)=\{\infty\}$. In particular, we can apply theorem  \ref{TeoCod} to compute its generalized entropy. Figure \ref{imgMapf} represents the dynamics of $f$.

\begin{figure}[h!]
\begin{center}
\tikzset{every picture/.style={line width=0.75pt}} 

\begin{tikzpicture}[x=0.75pt,y=0.75pt,yscale=-1,xscale=1]

\draw    (299.4,141.74) .. controls (244,215.5) and (209.1,128.99) .. (300.34,141.53) ;
\draw    (300.34,141.53) .. controls (397.67,142.17) and (352.67,212.5) .. (301.28,141.61) ;
\draw  [fill={rgb, 255:red, 0; green, 0; blue, 0 }  ,fill opacity=1 ] (297.63,140.42) .. controls (297.63,138.99) and (298.78,137.84) .. (300.21,137.84) .. controls (301.63,137.84) and (302.79,138.99) .. (302.79,140.42) .. controls (302.79,141.84) and (301.63,143) .. (300.21,143) .. controls (298.78,143) and (297.63,141.84) .. (297.63,140.42) -- cycle ;
\draw    (298.03,140.42) .. controls (171,131) and (291.07,10.17) .. (300.61,140.42) ;
\draw    (300.61,140.42) .. controls (310.2,10.2) and (423.11,131.57) .. (303.19,140.42) ;
\draw    (300.61,140.42) .. controls (419.97,127.57) and (359.97,243.29) .. (301.82,140.54) ;
\draw    (299.4,140.72) .. controls (238.6,245) and (183,122.2) .. (300.61,140.42) ;
\draw    (300.21,137.84) .. controls (299.75,109.81) and (294.2,77) .. (273.86,69.57) .. controls (253.51,62.14) and (243.8,70.6) .. (235.57,78.43) .. controls (227.34,86.26) and (228.04,113.98) .. (235.38,123.06) .. controls (242.71,132.14) and (254.88,133.13) .. (254.86,136.07) .. controls (254.84,139.01) and (236.57,140.26) .. (229.57,147.19) .. controls (222.57,154.12) and (215.66,173.26) .. (221.1,182.14) .. controls (226.53,191.03) and (238.57,193.36) .. (246.86,193.36) .. controls (255.14,193.36) and (272.14,178.43) .. (275.29,180.07) .. controls (278.43,181.71) and (269.76,193.98) .. (272.67,207.17) .. controls (275.57,220.36) and (285.51,231.43) .. (302.41,231.24) .. controls (319.32,231.04) and (326.32,221.45) .. (329.43,209.07) .. controls (332.54,196.69) and (324.52,183.02) .. (327.33,180.83) .. controls (330.14,178.64) and (341.46,188.77) .. (349.09,191.74) .. controls (356.71,194.71) and (370.71,192.14) .. (376.57,186.07) .. controls (382.43,180) and (383.86,166.43) .. (378.14,156.64) .. controls (372.43,146.86) and (348.57,140.41) .. (348.57,136.79) .. controls (348.57,133.16) and (358.71,133) .. (366.43,122.71) .. controls (374.14,112.43) and (375,91.29) .. (363.29,78.43) .. controls (351.57,65.57) and (341,63.57) .. (325,71) .. controls (309,78.43) and (300.38,109.69) .. (300.21,137.84) -- cycle ;
\draw  [dash pattern={on 0.84pt off 2.51pt}]  (360,101.8) .. controls (436.62,126.5) and (439.3,118.43) .. (439.76,91.59) ;
\draw [shift={(439.8,88.6)}, rotate = 90.78] [fill={rgb, 255:red, 0; green, 0; blue, 0 }  ][line width=0.08]  [draw opacity=0] (8.93,-4.29) -- (0,0) -- (8.93,4.29) -- cycle    ;
\draw  [dash pattern={on 0.84pt off 2.51pt}]  (311.31,217.34) .. controls (351.12,233.17) and (343.97,236.82) .. (322.4,260.56) ;
\draw [shift={(320.71,262.43)}, rotate = 311.95] [fill={rgb, 255:red, 0; green, 0; blue, 0 }  ][line width=0.08]  [draw opacity=0] (8.93,-4.29) -- (0,0) -- (8.93,4.29) -- cycle    ;
\draw  [dash pattern={on 0.84pt off 2.51pt}]  (232.14,171) .. controls (137.89,153.8) and (155.95,190.09) .. (159.02,205.68) ;
\draw [shift={(159.4,208.6)}, rotate = 268.26] [fill={rgb, 255:red, 0; green, 0; blue, 0 }  ][line width=0.08]  [draw opacity=0] (8.93,-4.29) -- (0,0) -- (8.93,4.29) -- cycle    ;
\draw  [dash pattern={on 0.84pt off 2.51pt}]  (243.57,93) .. controls (163.77,110.1) and (172.93,93.68) .. (174.05,74.85) ;
\draw [shift={(174.14,71.86)}, rotate = 90] [fill={rgb, 255:red, 0; green, 0; blue, 0 }  ][line width=0.08]  [draw opacity=0] (8.93,-4.29) -- (0,0) -- (8.93,4.29) -- cycle    ;
\draw  [fill={rgb, 255:red, 0; green, 0; blue, 0 }  ,fill opacity=1 ] (239.39,74.83) -- (243.32,68.72) -- (246.14,72.16) -- cycle ;
\draw  [fill={rgb, 255:red, 0; green, 0; blue, 0 }  ,fill opacity=1 ] (247.99,89.28) -- (251.78,83.09) -- (254.69,86.47) -- cycle ;
\draw  [fill={rgb, 255:red, 0; green, 0; blue, 0 }  ,fill opacity=1 ] (260.74,105.52) -- (263.63,98.86) -- (266.98,101.8) -- cycle ;
\draw  [fill={rgb, 255:red, 0; green, 0; blue, 0 }  ,fill opacity=1 ] (225.83,187.05) -- (219.23,184.03) -- (222.24,180.74) -- cycle ;
\draw  [fill={rgb, 255:red, 0; green, 0; blue, 0 }  ,fill opacity=1 ] (238.31,179.28) -- (231.73,176.21) -- (234.76,172.95) -- cycle ;
\draw  [fill={rgb, 255:red, 0; green, 0; blue, 0 }  ,fill opacity=1 ] (251.11,170.61) -- (244.26,168.23) -- (246.94,164.67) -- cycle ;
\draw  [fill={rgb, 255:red, 0; green, 0; blue, 0 }  ,fill opacity=1 ] (379.46,181.28) -- (377.17,188.17) -- (373.57,185.53) -- cycle ;
\draw  [fill={rgb, 255:red, 0; green, 0; blue, 0 }  ,fill opacity=1 ] (367.75,174.42) -- (364.93,181.11) -- (361.55,178.2) -- cycle ;
\draw  [fill={rgb, 255:red, 0; green, 0; blue, 0 }  ,fill opacity=1 ] (357.14,167.17) -- (353.87,173.65) -- (350.7,170.52) -- cycle ;
\draw  [fill={rgb, 255:red, 0; green, 0; blue, 0 }  ,fill opacity=1 ] (335.21,101.69) -- (342.16,103.78) -- (339.62,107.45) -- cycle ;
\draw  [fill={rgb, 255:red, 0; green, 0; blue, 0 }  ,fill opacity=1 ] (341.89,82.35) -- (349.06,83.47) -- (347.06,87.45) -- cycle ;
\draw  [fill={rgb, 255:red, 0; green, 0; blue, 0 }  ,fill opacity=1 ] (353.9,70.48) -- (360.83,72.65) -- (358.26,76.29) -- cycle ;
\draw    (301.82,140.54) .. controls (374.2,245.8) and (229.4,243) .. (300.61,140.42) ;
\draw    (298.34,141.53) .. controls (215,124.5) and (284.67,54.83) .. (300.34,141.53) ;
\draw    (300.34,141.53) .. controls (313,59.83) and (394.67,124.17) .. (302.34,141.53) ;
\draw    (301.28,141.61) .. controls (353.67,220.5) and (251.67,216.17) .. (300.34,141.53) ;
\draw  [dash pattern={on 0.84pt off 2.51pt}]  (367.14,175.8) .. controls (416.34,166.99) and (439.05,173.71) .. (439.59,204.63) ;
\draw [shift={(439.57,207.57)}, rotate = 271.47] [fill={rgb, 255:red, 0; green, 0; blue, 0 }  ][line width=0.08]  [draw opacity=0] (8.93,-4.29) -- (0,0) -- (8.93,4.29) -- cycle    ;
\draw  [fill={rgb, 255:red, 0; green, 0; blue, 0 }  ,fill opacity=1 ] (306.11,218.16) -- (299.2,220.38) -- (299.2,215.92) -- cycle ;
\draw  [fill={rgb, 255:red, 0; green, 0; blue, 0 }  ,fill opacity=1 ] (306.47,231.14) -- (299.59,233.46) -- (299.54,229) -- cycle ;
\draw  [fill={rgb, 255:red, 0; green, 0; blue, 0 }  ,fill opacity=1 ] (305.28,198.67) -- (298.52,201.3) -- (298.25,196.86) -- cycle ;

\draw (276.69,119.97) node [anchor=north west][inner sep=0.75pt]    {$\infty $};
\draw (109.94,47.09) node [anchor=north west][inner sep=0.75pt]    {$\{( x,0) \in \ P_{1}\}$};
\draw (388.4,60.92) node [anchor=north west][inner sep=0.75pt]    {$\{( x,0) \in \ P_{5}\}$};
\draw (121.2,212.4) node [anchor=north west][inner sep=0.75pt]    {$\{( x,0) \in \ P_{2}\}$};
\draw (263.09,262.8) node [anchor=north west][inner sep=0.75pt]    {$\{( x,0) \in \ P_{3}\}$};
\draw (382.4,209.2) node [anchor=north west][inner sep=0.75pt]    {$\{( x,0) \in \ P_{4}\}$};

\end{tikzpicture}
\caption{ Dynamics of $f$ for $L=3$.} \label{imgMapf}
\end{center} 
\end{figure}
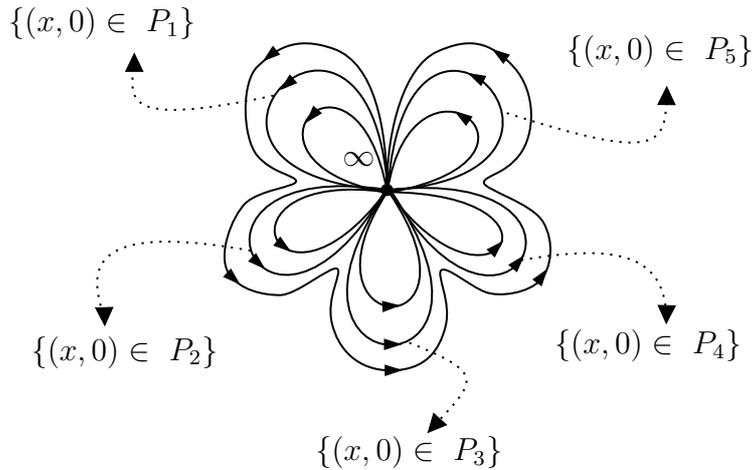

To compute $o(f)$, we need first to understand the location of the mutually singular sets. Let us call $\pi:\cup P_i\to P$ the projection and for each $i$, define $Y_i=\pi([-1/3,1/3]\times [0,1])$ with $[-1/3,1/3]\times [0,1]\subset P_i$. For the map constructed $f$, the family of sets $\{Y_1,\cdots, Y_{L+2}\}$ is mutually singular. Moreover, any other family of mutually singular sets, up to iteration of its elements with a diameter small enough, is equivalent to this one.  Figure \ref{imgMutSingLoc} represents the location of the sets $Y_i$.

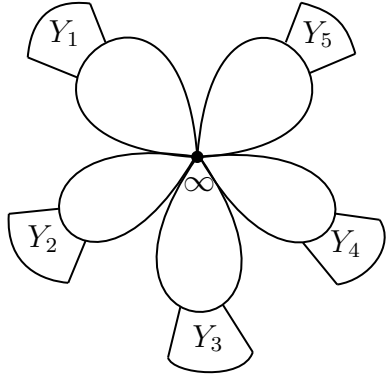
\begin{figure}[h!]
\begin{center}
\tikzset{every picture/.style={line width=0.75pt}} 

\begin{tikzpicture}[x=0.75pt,y=0.75pt,yscale=-1,xscale=1]

\draw  [fill={rgb, 255:red, 0; green, 0; blue, 0 }  ,fill opacity=1 ] (318.03,160.42) .. controls (318.03,158.99) and (319.18,157.84) .. (320.61,157.84) .. controls (322.03,157.84) and (323.19,158.99) .. (323.19,160.42) .. controls (323.19,161.84) and (322.03,163) .. (320.61,163) .. controls (319.18,163) and (318.03,161.84) .. (318.03,160.42) -- cycle ;
\draw    (236,110.36) -- (260.8,120.49) ;
\draw    (267.12,83.27) -- (274.76,102.72) ;
\draw    (236,110.36) .. controls (237.57,92.21) and (246.14,80.79) .. (267.12,83.27) ;
\draw    (373.37,203.64) -- (390.43,224.64) ;
\draw    (411.57,192.07) -- (389.04,188.9) ;
\draw    (390.43,224.64) .. controls (406.43,222.21) and (419.86,205.07) .. (411.57,192.07) ;
\draw    (306,261.36) -- (312.14,235.97) ;
\draw    (348.29,258.64) -- (333.43,234.93) ;
\draw    (306,261.36) .. controls (308.29,269.79) and (343.29,273.5) .. (348.29,258.64) ;
\draw    (264.86,202.64) -- (256.22,223.78) ;
\draw    (251.57,186.5) -- (226.57,189.07) ;
\draw    (256.22,223.78) .. controls (244.44,222.89) and (224.71,216.07) .. (226.57,189.07) ;
\draw    (318.03,160.42) .. controls (191,151) and (311.07,30.17) .. (320.61,160.42) ;
\draw    (320.61,160.42) .. controls (330.2,30.2) and (443.11,151.57) .. (323.19,160.42) ;
\draw    (320.61,160.42) .. controls (439.97,147.57) and (379.97,263.29) .. (321.82,160.54) ;
\draw    (319.4,160.72) .. controls (258.6,265) and (203,142.2) .. (320.61,160.42) ;
\draw    (321.82,160.54) .. controls (394.2,265.8) and (249.4,263) .. (320.61,160.42) ;
\draw    (376.7,118.23) -- (399.46,108.54) ;
\draw    (372.38,82.85) -- (364.75,102.78) ;
\draw    (399.46,108.54) .. controls (395.46,95.15) and (391,87.62) .. (372.38,82.85) ;

\draw (312.02,168.97) node [anchor=north west][inner sep=0.75pt]    {$\infty $};
\draw (245.1,90) node [anchor=north west][inner sep=0.75pt]    {$Y_{1}$};
\draw (385.39,195.92) node [anchor=north west][inner sep=0.75pt]    {$Y_{4}$};
\draw (316.7,243.4) node [anchor=north west][inner sep=0.75pt]    {$Y_{3}$};
\draw (234.42,193.2) node [anchor=north west][inner sep=0.75pt]    {$Y_{2}$};
\draw (371.63,90.49) node [anchor=north west][inner sep=0.75pt]    {$Y_{5}$};

\end{tikzpicture}
\caption{ Locations of sets $Y_i$ for $L=3$.} \label{imgMutSingLoc}
\end{center} 
\end{figure}

 The key observation to be made is that for a point $(x,y)\in P_i$ such that its projection $\pi(x,y)\in Y_i$ needs for $-\varphi_i(y)>0$ iterates to reach $Y_{i+1}$. This means, that with the height $y$, we can control the amount of time the point $\pi(x,y)\in Y_1$ is going to need to reach each one of the sets $Y_i$.

\subsection{Proof of the theorem for the particular case} \label{secPartCase}

Up to this point, we have followed the construction done \cite{HaRo19}. Our next step will be to modify the choice of the maps $\f_i$  done in \cite{HaRo19} such that $[c_{f,\F}(n)]=[a(n)]$.

Let us construct $\varphi_i$ by steps. We shall chose $\f_1$ such that it is negative, increasing, and, for each positive integer $k_1$, assumes the value $-k_1$ on a non trivial interval $I_{k_1}$. This collection of intervals tends to $0$ when $k_1$ tends to $+\infty$. For convenience, we assume that all these intervals are included in the interval $\left. (0, 1 \right]$. The number $k_1$ is going to be our main variable, and we would like to stress that any point of the form $\pi(x,y)\in Y_1$ with $y\in I_{k_1}$ is going to need $k_1$ iterates of $f$ to reach $Y_2$.

Now we look at $k_1$ iterations of the points in the previous step that are in $Y_2$. These points do have the same height $y$ when looked in $P_2$.  The restriction of $\f_2$ to $I_{k_1}$ tell us how many iterates are needed for these points in $Y_2$ to reach $Y_3$. We are going to ask that $\f_2$ restricted to $I_{k_1}$ is continuous, increasing and varies from $-(k_1+a_2(k_1))$ to $-k_1$ (we postpone for later the choice of the integer $0< a_2(k_1)\leq k_1$). Moreover, we need that for each integer $-k_2$ in the interval  $[-(k_1+ a_2(k_1)),-k_1]$, $\f_2$ assumes the value $-k_2$ on a non trivial sub-interval $I_{k_1,k_2}\subset I_{k_1}$.  Now, any point with height in  $I_{k_1,k_2}$ is going to need $k_1$ iterations to go from $Y_1$ to $Y_2$ and $k_2$ iterations to go from $Y_2$ to $Y_3$. We would like to observe that for fixed $k_1$, there are $a_2(k_1)$ possible iterations needed to reach from $Y_2$ to $Y_3$. Finally, we would like remark that we chose the interval $[-(k_1+ a_2(k_1)),-k_1]$ instead of $[-a_2(k_1),0]$ because we also need that the limit of $\f_2$ in $0$ be $-\infty$.  

Likewise, we define $\f_3$ to be increasing on each interval $I_{k_1,k_2}$ and assume each integer $-k_3$ between $-(3k_1-k_2+a_3(k_1))$ and $-(3k_1-k_2)$ on a non trivial sub-interval $I_{k_1,k_2,k_3}$ of $I_{k_1,k_2}$. To explain why we choose the interval like this, first observe that $2k_1 \leq k_1+ k_2 \leq 3k_1$. By the choice of the interval, if $0< a_3(k_1)\leq k_1$, then,
\[4k_1 \leq k_1+k_2+k_3 \leq 5k_1.\]
This gives us the following control: A point in $Y_1$ with height in $I_{k_1}$ is going to reach $Y_2$ in $k_1$ iterates, reach $Y_3$ in between $2k_1$ to $3k_1$ iterates and $Y_4$ in between $4k_1$ to $5k_1$ iterates. Similar to the previous step, for $k_1$ fixed, there are $a_3(k_1)$ possible iterations needed to reach from $Y_3$ to $Y_4$. 
	
We define $\f_i$ inductively until $\f_{L+1}$. The map $\f_{i+1}$ is chosen such that for $k_1, \cdots, k_i$ fixed, it assumes each integer value $-k_{i+1}$ in a sub-interval $I_{k_1,\cdots,k_{i+1}}$ of $I_{k_1,\cdots,k_{i}}$. The value of $-k_{i+1}$ ranges in an interval of length $a_{i+1}(k_1)\leq k_1$ and such that 
\[2ik_1\leq k_1+\cdots + k_{i+1}\leq (2i+1)k_1.\]
For $i+1=L+1$, we see
\[2Lk_1 \leq  k_1+\cdots + k_{L+1} \leq (2L+1)k_1.\]

Let us finally address the choice of $a_i(n)$. We choose them such that
\[\left[n \sum_{k_1=1}^{n} a_2(k_1)\cdots a_{L+1}(k_1) \right]  =[a(n)].\]

\begin{lemma}
Given $[a(n)]\in \B$ such that $[n^2]< [a(n)] \leq [n^L]$ there exist sequences $a_2(n),$ $\cdots,$ $a_{L+1}(n)$ such that 
\[\left[n \sum_{k=1}^{n} a_2(k)\cdots a_{L+1}(k) \right]  =[a(n)],\]
and $a_i(n)\leq n$ for all $i=2,\cdots, L+1$ and all $n\geq 1$. 
\end{lemma}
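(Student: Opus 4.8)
The statement is purely about real sequences, so the idea is to reverse–engineer the $a_i$ from the target order of growth. Write $P(k)=a_2(k)\cdots a_{L+1}(k)$ and $G(n)=\sum_{k=1}^{n}P(k)$; then what must be produced is a non–decreasing integer sequence $G$ with $[n\,G(n)]=[a(n)]$ whose increments $P(n)=G(n)-G(n-1)$ can be written as a product of $L$ positive integers, each at most $n$. First I would fix a representative of $[a(n)]$ and constants $c_{0},c_{1}>0$ with $c_{0}n^{2}\le a(n)\le c_{1}n^{L}$ for all $n\ge 1$, which is possible precisely because $[n^{2}]\le[a(n)]\le[n^{L}]$; this costs nothing at the level of orders of growth (and the strictness $[n^2]<[a]$ will not really be needed).

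The central step is to build $G$. The naive choice $G(n)=\lceil a(n)/n\rceil$ need not be non–decreasing, so I would instead take
\[G(n)=\Big\lceil \max_{1\le k\le n}\tfrac{a(k)}{k}\Big\rceil+n,\]
which is non–decreasing, has all increments $\ge 1$, and satisfies $[a(n)]\le[n\,G(n)]$ for free (the extra $+n$ being harmless since $n\le a(n)/n$). The reverse inequality $n\,G(n)\le C\,a(n)$ reduces, after discarding the term $n^{2}\le c_{0}^{-1}a(n)$ and an additive $n$, to the single estimate
\[\frac{a(k)}{k}\le C'\,\frac{a(n)}{n}\qquad\text{for all }k\le n,\]
i.e.\ to the assertion that $a(n)/n$ is comparable to a non–decreasing sequence. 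This is the heart of the matter and the step I expect to be the main obstacle: it is the one place where the hypothesis $[a(n)]\in\B$ (and, in the intended application, $[a(n)]\in\L$) must be used in full strength. The mechanism is that $[n^{2}]\le[a(n)]$ forces $a(n)\gtrsim n^{2}$, so a range on which $a$ is essentially constant cannot be long relative to its left endpoint, which bounds how far $a(n)/n$ can drop; turning this into the above uniform comparison, and using the bounded–jump (resp.\ linearly invariant) property to control the behaviour off such plateaus, is the delicate point of the proof.

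Granting such a $G$, so that $1\le P(n)=G(n)-G(n-1)$ and, by the previous step, $P(n)\le G(n)\le C\,a(n)/n\le Cc_{1}\,n^{L-1}\le n^{L}$ for all but finitely many $n$, the rest is bookkeeping. For each such $n$ put $a_{i}(n)=\big\lceil P(n)^{1/L}\big\rceil$ for $i=2,\dots,L+1$; then $1\le a_{i}(n)\le n$ because $1\le P(n)\le n^{L}$, and
\[P(n)\le\prod_{i=2}^{L+1}a_{i}(n)\le\big(P(n)^{1/L}+1\big)^{L}\le 2^{L}P(n),\]
so $\prod_{i}a_{i}(k)\asymp P(k)$ and hence $\sum_{k\le n}\prod_{i}a_{i}(k)\asymp\sum_{k\le n}P(k)=G(n)$, whence $\big[n\sum_{k\le n}a_{2}(k)\cdots a_{L+1}(k)\big]=[n\,G(n)]=[a(n)]$. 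It remains to satisfy $a_{i}(n)\le n$ at the finitely many small $n$ where the bound $P(n)\le n^{L}$ might fail (in particular to force $a_{i}(1)=1$): simply overwrite those finitely many values of $G$, e.g.\ by $G(n)=n$ there, which alters the sequence on a finite set only and so leaves its order of growth — and all of the above conclusions — intact. The one genuinely substantial ingredient is thus the comparison $a(k)/k\lesssim a(n)/n$ for $k\le n$.
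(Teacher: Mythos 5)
Your reduction is reasonable as far as it goes: writing $P(k)=a_2(k)\cdots a_{L+1}(k)$, $G(n)=\sum_{k\le n}P(k)$, splitting each increment into $L$ roughly equal integer factors via $\lceil P(n)^{1/L}\rceil$, and patching finitely many small $n$ are all harmless, and your envelope $G(n)=\lceil\max_{k\le n}a(k)/k\rceil+n$ does give $[a(n)]\le[nG(n)]$ cheaply. But the whole argument rests on the comparison $a(k)/k\le C\,a(n)/n$ for all $k\le n$, which you explicitly leave unproved, and this is not a closable technicality: it simply does not follow from the hypotheses. Take $L=3$ and build $a$ by alternating pieces: cubic growth $a(x)=x^3/M_{j-1}$ on $[M_{j-1},N_j]$, then a plateau at the value $N_j^3/M_{j-1}$ until $x^2$ catches up at $Q_j=(N_j^3/M_{j-1})^{1/2}$, then quadratic growth $a(x)=x^2$ up to $M_j$, with $N_j\gg M_{j-1}$. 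This $a$ is non-decreasing, satisfies $n^2\le a(n)\le n^3$, and $\log a$ is $3$-Lipschitz in $\log n$, so $a(2n)\le 8\,a(n)$; hence $[a(n)]\in\L\subset\B$, and $a(N_j)/N_j^2\to\infty$ gives $[n^2]<[a(n)]\le[n^3]$. Yet $\bigl(a(N_j)/N_j\bigr)\big/\bigl(a(Q_j)/Q_j\bigr)=\sqrt{N_j/M_{j-1}}\to\infty$, so $a(n)/n$ is not comparable to any non-decreasing sequence. Your heuristic that ``a plateau cannot be long relative to its left endpoint'' is exactly what fails here: the plateau runs from $N_j$ to about $N_j^{3/2}$. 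Note also that your own reduction shows the estimate cannot be dodged by a smarter choice of $G$: any expression $n\sum_{k\le n}a_2(k)\cdots a_{L+1}(k)$ with positive $a_i$ is $n$ times a non-decreasing sequence, so the desired identity itself forces $a(k)/k\le C\,a(n)/n$. The gap is therefore essential to this route (and in fact touches the statement at this level of generality, not just your proof of it).

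For comparison, the paper proceeds quite differently: it chooses the increments $e(n)=a_2(n)\cdots a_{L+1}(n)$ inductively so that $d(n)=n\sum_{k\le n}e(k)$ tracks $a(n)$ by matching ratios, $d(n+1)/d(n)=a(n+1)/a(n)$, and uses the BJP constant together with $a(n)\le c_2n^L$ only to check that the required $e(n+1)$ is at most $(n+1)^L$, hence factorizable with $a_i(n+1)\le n+1$. Your missing estimate reappears there in disguise as the requirement that the prescribed $e(n+1)$ be positive, i.e.\ $a(n+1)/a(n)\ge (n+1)/n$, which is again the monotonicity of $a(n)/n$ up to constants. So your mechanism (a global monotone envelope of $a(n)/n$) is genuinely different from the paper's inductive ratio matching, but as written your central claim is unproved, and under only BJP (or even LIP) plus $[n^2]<[a(n)]\le[n^L]$ it is false, so the proposal does not yield the lemma without an additional hypothesis of roughly the form ``$a(n)/n$ is equivalent to a non-decreasing sequence''.
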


\begin{proof}
Let us call $d(n)=  n \sum_{k=1}^{n} a_2(k)\cdots a_{L+1}(k)$ and $e(n)=a_2(n)\cdots a_{L+1}(n)$. Consider two constants $c_1\geq 2$ and $c_2>0$ such that $\frac{a(n+1)}{a(n)}\leq c_1$ and $a(n)\leq c_2 n^L$, for all $n\geq 0$. 

We suppose that we have picked our sequences up to $n$ verifying  $b_1 a(n)\leq d(n) \leq b_2 a(n)$, for some $b_1<b_2$.  
For the next step, we choose $e(n+1)$ such that $b_1 a(n+1)\leq d(n+1) \leq b_2 a(n+1)$. 

If $d(n+1)$ verifies $\frac{d(n+1)}{d(n)}= \frac{a(n+1)}{a(n)}$, then 
\[b_1 a(n+1) = b_1a(n) \frac{a(n+1)}{a(n)}\leq  d(n) \frac{d(n+1)}{d(n)} \leq  b_2a(n) \frac{a(n+1)}{a(n)}  = b_2 a(n+1).\]
This implies the desired property for $d(n+1)$. We need to see that this pick is compatible with our restriction $a_i(n)\leq n$ for all $i$ and all $n$. Once we observe that
\[\frac{d(n+1)}{d(n)} = \frac{n+1}{n} + e(n+1) \frac{n+1}{d(n)},\]
we infer that we need to define $e(n+1)$ by
\[e(n+1)= \left( \frac{a(n+1)}{a(n)} - \frac{n+1}{n}\right) \frac{d(n)}{n+1}.\]
Yet, 
\[ \left( \frac{a(n+1)}{a(n)} - \frac{n+1}{n}\right) \frac{d(n)}{n+1}\leq (C-2)b_2 c_2 \frac{n^L}{n+1}\leq (n+1)^{L},\]
for $n$ large enough. Since we have $L$ sequences $a_i$, we can pick $e(n+1)$ satisfying all the above.  

\end{proof}

Our task now is to compute $[c_{f,\F}(n)]$. 

Consider a word $w \in \A_{n}(f,\F)$ and $x\in S^2$ a point such that $w$ is the coding of the itinerary of $x$. We call the displacement of $w$, $k$ units to the right, to the word of length $n$ that codes the itinerary of $f^{-k}(x)$. The displacement of $w$, $k$ units to the left, is the word of length $n$ that codes the itinerary of $f^{k}(x)$.

Consider the syndetic set $\S=(2L+2)\N$. In light of lemma \ref{LemSyn}, if we prove that  $d_1 a(n) \leq c_{f,\F}(n) \leq d_2 a(n)$ for $n\in S$, then $[c_{f,\F}(n)]=[a(n)]$.

Let us consider $ b_2>b_1 >0$ such that 
\[b_1 a(n) \leq n \sum_{k_1=1}^{n} a_2(k_1)\cdots a_{L-1}(k_1)  \leq b_2 a(n)\ \forall n\in \N.\]
By linear invariance of $[a(n)]$ there exists $c_2> c_1>0$ such that
\[ c_1 a(k(2L+2)) \leq a(k) \leq c_2 a(k(2L+2))\ \forall k\in \N.\]

We now fix some $n\in \S$ and define $k\in \N$ such that $n=k(2L+2)$. If a word in $\A_{n}(f,\F)$ begins with $Y_1$ and is associated to $k_1\leq k$, then all the symbols $Y_2,\cdots, Y_{L+2}$ appear in $w$. By our construction, there are  $a_2(k_1)\cdots a_{L+1}(k_1)$ of those. Each one of this words can be displaced at least $k$ to the right creating new unique words in $\A_{n}(f,\F)$. This is true because all symbols $Y_i$ appear before $(2L+1)k_1 \leq (2L+1)k$. From this, we infer that in $\A_n(f,\F)$ there are at least  $k \sum_{k_1=1}^k  a_2(k_1)\cdots a_{L+1}(k_1)$ distinct words. Then,
\[c_{f,\F}(n)\geq k \sum_{k_1=1}^k  a_2(k_1)\cdots a_{L+1}(k_1)\geq b_1 a(k)\geq b_1 c_1 a(n).\]

On the other hand, every word in $\A_n(f,\F)$ is obtained from displacing a word that starts with $Y_1$. We can displace up to $n$  spaces  to the right or $(2L+2)n$ to the left. We only need to count up to $k_1 =n$ and therefore,    
\[c_{f,\F}(n)\leq (2L+3)n \sum_{k_1=1}^n  a_2(k_1)\cdots a_{L+1}(k_1)\leq (2L+3) c_2 a(n).\]
In conclusion, if $d_1= b_1c_1$ and $d_2=(2L+3)c_2$, we have proved that
\[d_1 a(n) \leq c_{f,\F}(n) \leq d_2 a(n)\ \forall n\in \S.\]
Thus, 
\[ [c_{f,\F}(n)]=[a(n)]\]
and with this, we have finished the proof of theorem \ref{teoFlex} for the case $o=[a(n)]\in \L$.

\subsection{Lemmas of orders of growth}

To prove the general case of theorem \ref{teoFlex}, the following lemma will simplify our reasoning. 
\begin{lemma}\label{lemSupOrd}
	Let $\Gamma\subset \mathbb{O}$ a countable subset of orders of growth, then there exists a countable and ordered subset $\hat \Gamma\subset \mathbb{O}$ such that $\sup (A) = \sup (B)$. Moreover, if $\Gamma\subset \B$ or $\Gamma\subset \L$, then so does $\hat\Gamma$. 
\end{lemma}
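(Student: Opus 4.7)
The plan is to construct $\hat\Gamma$ explicitly by taking finite pointwise maxima along an enumeration of $\Gamma$. Fix a representative $a_i(n)\in \OR$ for each class in an enumeration $\Gamma=\{[a_1(n)],[a_2(n)],\dots\}$, and set
\[
b_k(n) := \max\{a_1(n),\dots,a_k(n)\}, \qquad \hat\Gamma := \{[b_k(n)] : k \in \N\}.
\]
Since a finite maximum of non-decreasing sequences is non-decreasing, $b_k\in \OR$, and the inequality $b_k(n)\leq b_{k+1}(n)$ holds pointwise, so $\hat\Gamma$ is a countable chain in $\OG$ (the finite case is handled by the obvious eventual-constant enumeration).

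For the equality of suprema I would first observe that inside $\OG$ a finite pointwise max is genuinely a supremum: from $\max_i a_i(n) \leq \sum_i a_i(n) \leq k\max_i a_i(n)$ one gets $[b_k(n)] = \sup\{[a_1(n)],\dots,[a_k(n)]\}$ in $\OG$. Since the Dedekind–MacNeille completion $\COG$ preserves every supremum already existing in $\OG$, this is also the supremum in $\COG$. In particular $[b_k(n)]\leq \sup\Gamma$ for every $k$, whence $\sup\hat\Gamma \leq \sup\Gamma$. Conversely, $a_j(n)\leq b_k(n)$ pointwise whenever $j\leq k$, so each $[a_j(n)]\leq \sup\hat\Gamma$, giving $\sup\Gamma \leq \sup\hat\Gamma$.

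The preservation of BJP and LIP is where the specific form of $b_k$ pays off. If each $a_i$ satisfies $a_i(n+1)\leq C_i\, a_i(n)$, then $b_k(n+1) \leq (\max_{i\leq k} C_i)\, b_k(n)$, so $[b_k(n)]\in \B$. For LIP, I would invoke item (2) of Proposition \ref{propLIPProp}: every $a_i\in \L$ satisfies $[a_i(mn)]=[a_i(n)]$ for \emph{every} integer $m\geq 2$ simultaneously. Hence, fixing any such $m$, there are constants $D_i$ with $a_i(mn)\leq D_i\, a_i(n)$, and therefore
\[
b_k(n) \leq b_k(mn) \leq \bigl(\max_{i\leq k} D_i\bigr)\, b_k(n),
\]
which gives $[b_k(mn)]=[b_k(n)]$ and thus $[b_k(n)]\in \L$.

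I do not foresee a real obstacle: the argument reduces to the observation that a finite pointwise max is the supremum in $\OG$ (and in $\COG$), together with the routine stability of $\B$ and $\L$ under finite pointwise maxima. The one point worth writing carefully is the transition from suprema in $\OG$ to suprema in $\COG$, so that $\sup\hat\Gamma$ really can be compared with $\sup\Gamma$ inside the completion.
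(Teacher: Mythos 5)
Your proposal is correct and follows essentially the same route as the paper: you build the increasing chain of classes of finite pointwise maxima, identify each with the supremum of the corresponding finite subfamily, and check that $\B$ and $\L$ are stable under finite maxima (using, as the paper does, item (2) of Proposition \ref{propLIPProp} to get a common $m$ for the LIP case). The only differences are cosmetic — you define the maxima directly rather than by iterated binary suprema, verify the least-upper-bound property via the sum sandwich instead of the paper's contradiction argument, and make explicit the (correct) remark that the Dedekind--MacNeille completion preserves suprema already existing in $\OG$.
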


In order to prove lemma \ref{lemSupOrd}, the following is necessary. 

\begin{lemma}\label{lemSup}
Suppose that  $[a(n)],[b(n)] \in \OG$. Then:
\begin{enumerate} 
\item $\sup\{[a(n)],[b(n)]\}=[\max\{a(n),b(n)\}]$.
\item If  $[a(n)],[b(n)] \in \B$ or  $[a(n)],[b(n)] \in \L$, so does $\sup\{[a(n)],[b(n)]\}$.
\end{enumerate}
\end{lemma}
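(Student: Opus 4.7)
The plan is to handle the two parts separately. Both are essentially routine once one notes that the key operation, pointwise maximum, behaves well under linear comparison.

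For part (1), I would first verify that $[\max\{a(n),b(n)\}]$ is a well-defined element of $\OG$: the pointwise maximum of two non-decreasing sequences in $[0,\infty)$ is again non-decreasing and non-negative. It is clearly an upper bound of $\{[a(n)],[b(n)]\}$ since $a(n),b(n)\leq \max\{a(n),b(n)\}$, so both $[a(n)]\leq [\max\{a(n),b(n)\}]$ and $[b(n)]\leq [\max\{a(n),b(n)\}]$ hold with constant $1$. To see it is the \emph{least} upper bound, I would take any $[c(n)]\in \OG$ with $[a(n)]\leq [c(n)]$ and $[b(n)]\leq [c(n)]$, extract constants $C_1,C_2>0$ with $a(n)\leq C_1 c(n)$ and $b(n)\leq C_2 c(n)$, and observe
\[
\max\{a(n),b(n)\}\leq \max\{C_1,C_2\}\, c(n),
\]
so $[\max\{a(n),b(n)\}]\leq [c(n)]$. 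This identifies the supremum in $\OG$.

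For part (2) in the BJP case, I would pick constants $C_1,C_2>0$ with $a(n+1)\leq C_1 a(n)$ and $b(n+1)\leq C_2 b(n)$. Then
\[
\max\{a(n+1),b(n+1)\}\leq \max\{C_1 a(n), C_2 b(n)\}\leq \max\{C_1,C_2\}\max\{a(n),b(n)\},
\]
so $[\max\{a(n),b(n)\}]\in \B$.

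For the LIP case I first appeal to Proposition \ref{propLIPProp}(2) to normalize the stretching factor: since each of $[a(n)]$ and $[b(n)]$ lies in $\L$, both satisfy $[a(2n)]=[a(n)]$ and $[b(2n)]=[b(n)]$. Pick constants $c_1,c_2,d_1,d_2>0$ with $c_1 a(n)\leq a(2n)\leq c_2 a(n)$ and $d_1 b(n)\leq b(2n)\leq d_2 b(n)$. Then the sandwich
\[
\min\{c_1,d_1\}\max\{a(n),b(n)\}\leq \max\{a(2n),b(2n)\}\leq \max\{c_2,d_2\}\max\{a(n),b(n)\}
\]
gives $[\max\{a(2n),b(2n)\}]=[\max\{a(n),b(n)\}]$, so the supremum lies in $\L$ (with $m=2$). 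No real obstacle is anticipated; the only minor point worth highlighting in writing is the preliminary use of Proposition \ref{propLIPProp}(2) to line up the stretching factors of the two LIP sequences before taking maxima.
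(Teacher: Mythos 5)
Your proposal is correct and follows essentially the same route as the paper: part (1) identifies $[\max\{a(n),b(n)\}]$ as the least upper bound by exactly the same constant-juggling (the paper merely phrases it as a proof by contradiction via an intermediate $[c(n)]\in\OG$, while you argue directly), and part (2) carries out precisely the steps the paper indicates, using the BJP constants and Proposition \ref{propLIPProp}(2) to normalize both LIP factors to $m=2$.
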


\begin{proof}
	(1) We know that $[a(n)]\leq [\max\{a(n),b(n)\}] $ and $[b(n)]\leq [\max\{a(n),b(n)\}]$, then $\sup\{[a(n)],[b(n)]\}\leq [\max\{a(n),b(n)\}]$. But, if we suppose $$\sup\{[a(n)],[b(n)]\}< [\max\{a(n),b(n)\}],$$ then there exists $[c(n)]\in \OG$ such that $$\sup\{[a(n)],[b(n)]\}<[c(n)]< [\max\{a(n),b(n)\}].$$ 
By definition, we know  $[a(n)]\leq\sup\{[a(n)],[b(n)]\},$ and then $[a(n)]\leq [c(n)]$. Analogously, $[b(n)]\leq [c(n)]$. Thus, there must exist constants $d_1,d_2>0$ such that $a(n)\leq d_1 c(n)$ and $b(n)\leq d_2 c(n)$. Let $d=\max\{d_1,d_2\}$ and observe $\max\{a(n),b(n)\}\leq dc(n)$, which implies $[\max\{a(n),b(n)\}]\leq [c(n)]$, a contradiction. Therefore, $$\sup\{[a(n)],[b(n)]\}=[\max\{a(n),b(n)\}],$$ as we wanted.

$(2)$ For elements in $\B$ is an immediate conclusion from part (1) of this lemma. For elements in $\L$, it is necessary to use point $(2)$ of proposition \ref{propLIPProp}. 
\end{proof}

\begin{proof}[Proof of lemma \ref{lemSupOrd}]
	Let $\Gamma=\{[b_1(n)],[b_2(n)],\cdots,[b_k(n)],\cdots\}\subset \mathbb{O}$, we construct the subset $\hat \Gamma$ as follows: 
$$\begin{array}{ccl}
[a_1(n)]&=&[b_1(n)], \\ 

[a_2(n)]&=&\sup\{[b_1(n)],[b_2(n)]\}=[\max\{b_1(n),b_2(n)\}], \\ 

[a_3(n)]&=&\sup\{[a_1(n)],[b_3(n)]\}=\sup\{\sup\{[b_1(n)],[b_2(n)]\},[b_3(n)]\}\\ 

 &=&\sup\{[b_1(n)],[b_2(n)],[b_3(n)]\} =[\max\{b_1(n),b_2(n),b_3(n)\}], \\
 
\vdots & & \\

[a_k(n)]&=&\sup\{[a_{k-1}(n)],[b_{k}(n)]\}=\cdots = [\max\{b_1(n),b_2(n),\cdots, b_{k}(n)\}], \\

\vdots
\end{array}$$
The previous identities hold by lemma \ref{lemSup} and an inductive argument. $A$ is clearly a countable set, and it is easy to see that it is an ordered set, in fact
$$\begin{array}{rcl} [a_1(n)]&=&\sup\{[b_1(n)],[b_2(n)]\}\leq\sup\{[b_1(n)],[b_2(n)],[b_3(n)]\}\\
&=&[a_2(n)]\leq \cdots\leq  [a_k(n)] \leq [a_{k+1}(n)] \leq \cdots .
\end{array}$$
If $\Gamma\subset \B$ or $\Gamma\subset \L$, by the second item of $\ref{lemSup}$, so does $\hat \Gamma$. 	
	
We shall prove now that $\sup (\hat \Gamma)= \sup (\Gamma)$. Since $\sup (\hat \Gamma) \geq [a_k(n)]\geq [b_k(n)]$, we deduce $\sup (\hat \Gamma) \geq \sup (\hat \Gamma)$. On the other hand, $\sup (\Gamma) \geq [b_k(n)]$, for all $k\in \N$, and by definition of $[a_k(n)]$, we infer $\sup (\Gamma)\geq [a_k(n)]$ for all $k\in \N$. Therefore, $\sup (\Gamma)\leq \sup (\hat \Gamma)$.
\end{proof}

\subsection{Proof of the general case}

Let us consider $\Gamma \subset \L$ countable such that $[n^2]\leq o=\sup(\Gamma)\leq \sup(\Pol)$. We want to construct $f$ such that $o(f)=o$. 

By lemma \ref{lemSupOrd}, we might suppose that $\Gamma$ is ordered. Moreover, if $\Gamma=\{[a_k(n)]:k\in \N\}$, we may also assume that $[a_1(n)]\geq [n^2]$.

We first construct $f_1\in \H$ such that $o(f_1)=[a_1(n)]$ using the technique of subsection \ref{secPartCase}. We shall call $\infty$ to the only fixed point of $f_1$ and $D_1$ as the disk associated to the lower semi-plane in $P_1$. We also call $S_1$ to the closure of its complement. The dynamics constructed in subsection \ref{secPartCase} lay entirely in the region $S_1$. Therefore, $o(f_{1|S_1})=[a_1(n)]$.

\begin{figure}[!h]\label{disf2}
\begin{center}
\tikzset{every picture/.style={line width=0.75pt}} 

\begin{tikzpicture}[x=0.75pt,y=0.75pt,yscale=-1,xscale=1]

\draw    (108,57.17) .. controls (124.33,79.5) and (162.07,83.64) .. (162.35,44) ;
\draw [shift={(135.91,73.82)}, rotate = 359.3] [fill={rgb, 255:red, 0; green, 0; blue, 0 }  ][line width=0.08]  [draw opacity=0] (10.72,-5.15) -- (0,0) -- (10.72,5.15) -- (7.12,0) -- cycle    ;
\draw    (226.67,78.83) .. controls (187.67,91.5) and (188.33,120.17) .. (233,137.5) ;
\draw [shift={(198.51,103.16)}, rotate = 86.49] [fill={rgb, 255:red, 0; green, 0; blue, 0 }  ][line width=0.08]  [draw opacity=0] (10.72,-5.15) -- (0,0) -- (10.72,5.15) -- (7.12,0) -- cycle    ;
\draw    (233.33,157.83) .. controls (164.33,109.17) and (135,109.17) .. (73,157.83) ;
\draw [shift={(159.13,121.86)}, rotate = 184.37] [fill={rgb, 255:red, 0; green, 0; blue, 0 }  ][line width=0.08]  [draw opacity=0] (10.72,-5.15) -- (0,0) -- (10.72,5.15) -- (7.12,0) -- cycle    ;
\draw    (126.67,215.83) .. controls (139.67,194.17) and (166,193.5) .. (165.33,222.5) ;
\draw [shift={(144.27,200.92)}, rotate = 352.09] [fill={rgb, 255:red, 0; green, 0; blue, 0 }  ][line width=0.08]  [draw opacity=0] (10.72,-5.15) -- (0,0) -- (10.72,5.15) -- (7.12,0) -- cycle    ;
\draw    (206.67,196.5) .. controls (198.33,181.5) and (204.33,150.83) .. (230.33,163.5) ;
\draw [shift={(203.85,175.03)}, rotate = 288.5] [fill={rgb, 255:red, 0; green, 0; blue, 0 }  ][line width=0.08]  [draw opacity=0] (10.72,-5.15) -- (0,0) -- (10.72,5.15) -- (7.12,0) -- cycle    ;
\draw    (171,222.17) .. controls (165.67,198.83) and (187.33,179.83) .. (202,203.17) ;
\draw [shift={(174.01,200.66)}, rotate = 310] [fill={rgb, 255:red, 0; green, 0; blue, 0 }  ][line width=0.08]  [draw opacity=0] (10.72,-5.15) -- (0,0) -- (10.72,5.15) -- (7.12,0) -- cycle    ;
\draw  [fill={rgb, 255:red, 0; green, 0; blue, 0 }  ,fill opacity=1 ] (379.9,122.3) .. controls (379.9,121.25) and (380.75,120.4) .. (381.8,120.4) .. controls (382.85,120.4) and (383.7,121.25) .. (383.7,122.3) .. controls (383.7,123.35) and (382.85,124.2) .. (381.8,124.2) .. controls (380.75,124.2) and (379.9,123.35) .. (379.9,122.3) -- cycle ;
\draw    (382.79,122.56) .. controls (490.56,225) and (281,222.33) .. (380.79,122.19) ;
\draw    (381.8,122.3) .. controls (520.87,100.6) and (380.47,25) .. (381.8,120.4) ;
\draw    (381.8,120.4) .. controls (380.07,15.8) and (249.27,100.2) .. (381.8,122.3) ;
\draw    (380.29,123.44) .. controls (364.42,167.06) and (349.66,159.26) .. (378.79,123.81) ;
\draw    (381.8,124.2) .. controls (389.29,162.69) and (402.79,164.81) .. (382.79,123.31) ;
\draw    (382.79,123.31) .. controls (397.36,157.62) and (414.9,160.38) .. (382.79,122.56) ;
\draw    (175,46.5) .. controls (168,67.17) and (183.33,88.5) .. (221,70.5) ;
\draw [shift={(181.25,73)}, rotate = 27.81] [fill={rgb, 255:red, 0; green, 0; blue, 0 }  ][line width=0.08]  [draw opacity=0] (10.72,-5.15) -- (0,0) -- (10.72,5.15) -- (7.12,0) -- cycle    ;
\draw    (102,67.73) .. controls (127.33,100.83) and (114.33,114.17) .. (75,140.5) ;
\draw [shift={(107.51,114.63)}, rotate = 306.28] [fill={rgb, 255:red, 0; green, 0; blue, 0 }  ][line width=0.08]  [draw opacity=0] (10.72,-5.15) -- (0,0) -- (10.72,5.15) -- (7.12,0) -- cycle    ;
\draw    (89,193.83) .. controls (105,179.5) and (125.67,189.5) .. (119,212.5) ;
\draw [shift={(106.83,187.31)}, rotate = 18.1] [fill={rgb, 255:red, 0; green, 0; blue, 0 }  ][line width=0.08]  [draw opacity=0] (10.72,-5.15) -- (0,0) -- (10.72,5.15) -- (7.12,0) -- cycle    ;
\draw    (77.67,162.5) .. controls (93,156.17) and (109.33,173.17) .. (87.67,187.83) ;
\draw [shift={(93.44,163.98)}, rotate = 50.62] [fill={rgb, 255:red, 0; green, 0; blue, 0 }  ][line width=0.08]  [draw opacity=0] (10.72,-5.15) -- (0,0) -- (10.72,5.15) -- (7.12,0) -- cycle    ;
\draw    (383.7,122.3) .. controls (455.25,217.04) and (504.87,89) .. (381.8,122.3) ;
\draw    (379.29,122.94) .. controls (315.27,210.2) and (268.87,89) .. (381.8,122.3) ;
\draw    (381.17,124.06) .. controls (373.92,158.69) and (390.67,172.31) .. (381.8,124.2) ;
\draw    (380.29,123.44) .. controls (363.04,164.31) and (375.17,171.19) .. (381.42,122.94) ;

\draw (146.29,130.84) node [anchor=north west][inner sep=0.75pt]    {$P_{1}$};
\draw (210.95,94.51) node [anchor=north west][inner sep=0.75pt]    {$P_{2}$};
\draw (192.24,42.92) node [anchor=north west][inner sep=0.75pt]    {$P_{3}$};
\draw (214.62,171.84) node [anchor=north west][inner sep=0.75pt]    {$P_{1}^{2}$};
\draw (180.29,201.51) node [anchor=north west][inner sep=0.75pt]    {$P_{2}^{2}$};
\draw (137.62,210.51) node [anchor=north west][inner sep=0.75pt]    {$P_{3}^{2}$};
\draw (352.81,118.66) node [anchor=north west][inner sep=0.75pt]    {$\infty $};
\draw (374.46,161.54) node [anchor=north west][inner sep=0.75pt]    {$D_{1}$};
\draw (427.23,169.33) node [anchor=north west][inner sep=0.75pt]    {$S_{1}$};
\draw (122.57,37.59) node [anchor=north west][inner sep=0.75pt]    {$P_{4}$};
\draw (61.9,90.92) node [anchor=north west][inner sep=0.75pt]    {$P_{5}$};
\draw (93.95,195.51) node [anchor=north west][inner sep=0.75pt]    {$P_{4}^{2}$};
\draw (68.62,164.84) node [anchor=north west][inner sep=0.75pt]    {$P_{5}^{2}$};

\end{tikzpicture}
\caption{$f_2:S^2 \rightarrow S^2$.}
\end{center}
\end{figure}
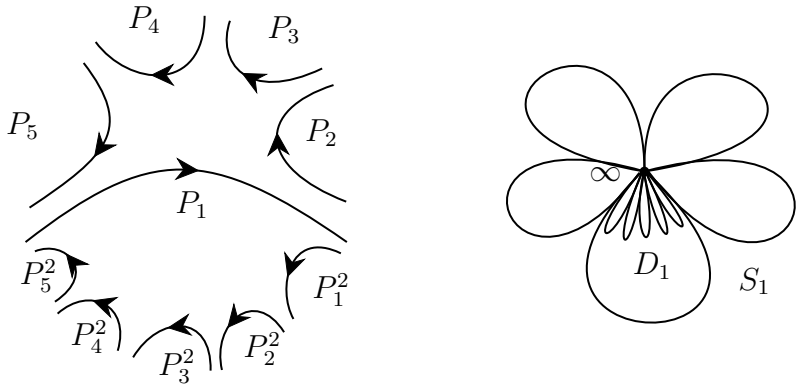

We now define $f_2\in \H$ such that $f_{2|S_1}=f_{1|S_1}$ and $f_{2|D_1}$ verifies $o( f_{2|D_1})=[a_2(n)]$. The construction is made in an analogous way by adding new planes $P^2_i$ and using the lower semi-plane of $P_1$. Figure \ref{disf2} represents the construction of $f_2$.

We choose one of the new planes and call $D_2$ to the region associated to its lower half-plane. We call $S_2$ to the closure of the complement of $D_2$ and observe that dynamics created so far lay entirely in $S_2$. In particular, 
\[o(f_{2|S_2})=o(f_2)=\sup\{o(f_{2|S_1}), o(f_{2|D_1})\}=\sup \{[a_1(n)],[a_2(n)]\}= [a_2(n)].\]

Inductively, we define a family of regions  $D_k$ and $S_k=\overline{D_k^c}$ where $D_k$ is a disk, contains $\infty$ in its border, $D_{k+1}\subset D_k$, and we may also ask that $diam(D_k)\to 0$. By induction, we also have a family of maps $f_k\in \H$ such that:
\begin{itemize}
\item  $f_k(S_i)=S_i$ for all $i\leq k$. 
\item  $f_{k+1|S_k}= f_{k|S_k}$.
\item  $o(f_k)= o(f_{k|S_k})=[a_k(n)]$. 
\end{itemize}

This family of dynamical systems in $\H$ define a unique map $f\in \H$ with $f(x)=f_k(x)$ if $x\in S_k$.

Let us check if $o(f)=\sup(\Gamma)$. We first observe that
\[o(f)\geq o(f_{|S_k})= o(f_{k|S_k})=[a_k(n)], \]
thus, $o(f)\geq\sup (\Gamma)$. To see the other inequality, we chose $\e>0$ and $k$ such that $diam(D_k)\leq \e/2$. Since $f(D_k)= D_k$, for any $x_k\in D_k$, the dynamical ball $B(x_k,n,\e)$ contains $D_k$ for every $n$. If $G_n$ is an $(n,\e)$ generator set of $f_{|S_k}$ with 
$g_{f_{|S_k},\e}(n) =   \#G_n$, then $G_n\cup \{x_k\}$ is an $(n,\e)$-generator set of $f$. Therefore, 
$g_{f,\e}(n)\leq g_{f_{|S_k},\e}(n) +1 $ and then 
\[ [g_{f,\e}(n)]\leq [g_{f_{|S_k},\e}(n)]\leq o(f_{|S_k} ) = [a_k(n)]\leq \sup (\Gamma).\]
From this, we conclude  that $o(f)\leq \sup (\Gamma)$. This proves $o(f)=\sup (\Gamma)$ and finishes the proof of theorem \ref{teoFlex}.

\bibliographystyle{siam}
\bibliography{Bibliog}

\end{document}